\documentclass[11pt,a4paper]{article}
\usepackage[T1]{fontenc}
\usepackage{lmodern,amsmath,amssymb,amsthm,mathtools,microtype,booktabs}
\usepackage[margin=27mm]{geometry}
\usepackage[colorlinks=true,linkcolor=blue,citecolor=blue,urlcolor=blue]{hyperref}
\hypersetup{pdftitle={Spectral analysis and certified evaluation of rank-two weighted LIL constants},
pdfauthor={Elina Moldavskaya},pdfsubject={Rank-two weighted LIL constants}}
\newtheorem{theorem}{Theorem}[section]
\newtheorem{lemma}[theorem]{Lemma}
\newtheorem{proposition}[theorem]{Proposition}

\theoremstyle{remark}
\newcommand{\dd}{\,\mathrm d}

\newcommand{\Var}{\operatorname{Var}}
\newcommand{\tr}{\operatorname{tr}}
\newcommand{\sgn}{\operatorname{sgn}}

\newcommand{\norm}[1]{\left\lVert#1\right\rVert}
\newcommand{\ip}[2]{\left\langle#1,#2\right\rangle}
\numberwithin{equation}{section}

\title{Constants in the Weighted Law of the Iterated Logarithm under Long-Range Dependence: Hermite Rank Two}
\author{Elina Moldavskaya\\[4pt]
\small Technion--Israel Institute of Technology, Israel\\
\small\href{mailto:elina.mol@technion.ac.il}{\texttt{elina.mol@technion.ac.il}}}
\date{}
\begin{document}
\maketitle
\begin{abstract}
At Hermite rank two, the constant in the weighted law of the iterated
logarithm under long-range dependence is represented as the largest
eigenvalue of a suitably normalized positive integral operator on the
unit interval. The same eigenvalue determines the exponential-moment
threshold and the logarithmic right-tail rate of the weighted
second-chaos limit, which in the unweighted case is the Rosenblatt law.
The weighted third spectral moment is evaluated in closed form.
Convergent two-sided spectral enclosures are obtained, and in the
weight-concentration limit the leading eigenvalue is characterized by
a scalar equation with a uniform geometric remainder. At the
unweighted memory boundary, the constant decays like the square root
of the distance to criticality. Its leading coefficient is determined
to $25$ decimal places with certified full-operator residual bounds.
After division by the square-root boundary factor, the
weight-concentration and memory limits commute. 
Their common coefficient differs from the unweighted boundary
coefficient and lies in the certified interval
$(1.370323114331,\,1.370323114332)$.
The joint asymptotic formula is established with a uniform
two-parameter remainder bound.
\end{abstract}

\medskip
\noindent
\textbf{2020 Mathematics Subject Classification.}
Primary 60F15; Secondary 60G15, 45C05, 65G20.

\section{Introduction}\label{sec:introduction}
The rank-two constant in a non-Gaussian law of the iterated logarithm is
the largest eigenvalue of a normalized second-chaos kernel. This connects
a pathwise limit theorem with the spectral description of its limiting
distribution. A variational characterization alone, however, does not
supply a quantitative evaluation or control the error of a finite matrix
approximation.

Our starting stochastic result is the weighted LIL in
\cite{MoldavskayaLIL}. The companion paper \cite{MoldavskayaGeneral}
studies global bounds, parameter expansions and weight concentration for
all Hermite ranks. The present paper treats the spectral questions specific
to rank two. The common definitions are recalled explicitly; none of the
companion paper's general-rank estimates depends on the numerical
certificates proved here.

For $0<\alpha<1/2$ and $2d+2\alpha<1$, let
\[
 (T_{\alpha,d}f)(x)
 =\int_0^1x^{-d/2}y^{-d/2}|x-y|^{-\alpha}f(y)\dd y.
\]
The constant is its leading eigenvalue divided by
$\sqrt{2\tr(T_{\alpha,d}^2)}$. Our results describe this spectral
quantity, its distributional interpretation, and its two parameter limits.

First, Section~\ref{sec:ranktwo} identifies the moment threshold and
logarithmic tail rate of the weighted second-chaos law.
Section~\ref{sec:traces} evaluates its third spectral moment and explains
why the Selberg product cannot be substituted for all higher cycle traces.
Second, Section~\ref{sec:interval-certificate} proves
\[
 C_{2,0}=\frac12\lambda_{\max}(K_{1/2}),\qquad
 \Lambda_2(\alpha,0)=C_{2,0}\sqrt{1-2\alpha}
                       [1+O(1-2\alpha)].
\]
It supplies an analytic bound for the remaining spectrum and a certified
interval for this coefficient. The operator at $\alpha=1/2$ is auxiliary:
the stochastic LIL is not asserted at that excluded endpoint.
Third, Section~\ref{sec:concentration} gives the fixed-space operator
representation, its Laguerre matrix and spectral error bounds. Theorem
\ref{thm:joint-endpoint} identifies the common coefficient of the
weight-concentration and memory limits, distinct from $C_{2,0}$.

\paragraph{Relation to earlier work.}
Mori--Oodaira already express unweighted LIL endpoints as variational
extrema and give the rank-two spectral interpretation
\cite[Corollary 3.2]{MO86}; their functional theorem treats nonlinear
Gaussian sequences \cite[Theorems 5--6]{MO87}. Veillette--Taqqu identify
the interval operator for the Rosenblatt law and study its distribution
\cite{VT13}; Leonenko--Pepelyshev develop numerical approximations
\cite{LP25}. These spectral characterizations, the elementary tail
argument, and the general residual principle are not claimed as new.
The present analysis develops explicit weighted formulas, parameter-limit
results, and reproducible full-operator error certificates.

\paragraph{Motivation from learning with dependent observations.}
Weighted empirical risk minimization under long-range dependence leads to
sharp pathwise constants and cluster sets that depend on the sample
weights; see \cite{MoldavskayaML}. This provides an application-oriented
motivation for quantitative evaluation of weighted LIL constants.
The present paper treats the rank-two spectral problem. It does not
assert a finite-sample learning guarantee or a 25-digit evaluation for
arbitrary interior parameters.

\section{Model and normalization}\label{sec:model}
Let $(X_t)$ be a centered stationary standard Gaussian sequence with
\[
 \rho(h)\sim c h^{-\alpha}L_0(h),\qquad
 f_X(\lambda)\sim c_f|\lambda|^{\alpha-1}L_0(1/|\lambda|),
\]
where $L_0$ is eventually positive and slowly varying and $c,c_f>0$.
We retain both conditions, with the convention
$\rho(h)=\int_{-\pi}^{\pi}e^{ih\lambda}f_X(\lambda)\dd\lambda$.
The density is bounded on compact sets away from zero and belongs to
$W^{1,1}([-\pi,-\delta]\cup[\delta,\pi])$ for every $0<\delta<\pi$.
The centered function $G\in L^2(\mathbb R,\phi)$ has Hermite rank two,
where $\phi$ is standard Gaussian measure:
\[
 G(x)=\sum_{q=2}^{\infty}\frac{J_q}{q!}H_q(x),\qquad J_2\ne0.
\]
For weights $a_t=t^{-d}\ell_a(t)$, the factor $\ell_a$ is eventually
positive, slowly varying and continuously differentiable, with
$x\ell_a'(x)/\ell_a(x)\to0$. Put
\[
 S_n=\sum_{t=1}^n a_tG(X_t),\qquad A_n^2=\Var(S_n),
 \qquad 0<\alpha<\tfrac12,\quad 2d+2\alpha<1.
\]
Under these hypotheses, the first paper gives
\[
 \limsup_{n\to\infty}\frac{|S_n|}{A_n(2\log\log n)}
 =\Lambda_2(\alpha,d)\quad\text{almost surely}.
\]
We take this stochastic theorem as input. The deterministic operator
statements below do not require a second proof of it. The case $d=0$
includes constant weights and admissible nonconstant slowly varying
weights, since their limiting profile is one after variance normalization.

Write $\eta=1-d$, $w(x)=x^{-d}$, $W=1/\eta$, and
$\mu_d(\dd x)=w(x)\dd x/W$. Define
\begin{align}
 I_{2,\alpha,d}
 &=\iint x^{-d}y^{-d}|x-y|^{-2\alpha}\dd x\dd y
 =\frac{2B(1-d,1-2\alpha)}{2-2d-2\alpha},\label{quad:eq:I}\\
 (R_\alpha f)(x)&=\int_0^1|x-y|^{-\alpha}f(y)\mu_d(\dd y),
 &Z_2&=I_{2,\alpha,d}/W^2.\label{quad:eq:R}
\end{align}
The beta identity follows by $y=xt$ on $0<y<x<1$.
The rank-two specialization of the exact variational formula in
\cite[Proposition 3.1]{MoldavskayaGeneral} is
\[
 \Lambda_2(\alpha,d)=\frac{D_2}{\sqrt{2Z_2}},\qquad
 D_2=\sup_{\|f\|_{L^2(\mu_d)}=1}\langle f,R_\alpha f\rangle.
\]
We use this identity as an input from that manuscript; its general-rank
proof is not repeated here. The normalization is $2\|Q_1\|_2^2=1$ for the second-chaos
kernel in \cite{MoldavskayaLIL}. The sign of $J_2$ disappears in the
absolute LIL endpoint; it is retained when orienting the limiting law below.

\section{Rank two: weighted law, Rosenblatt specialization, and tails}\label{sec:ranktwo}
Let $m=2$ and define the positive Hilbert--Schmidt operator
\[
 (Tf)(x)=\int_0^1x^{-d/2}y^{-d/2}|x-y|^{-\alpha}f(y)\dd y.
\]
Let $\mu_1\ge\mu_2\ge\cdots>0$ be its eigenvalues and put
\[
 a_j=\frac{\mu_j}{\sqrt{2I_{2,\alpha,d}}},\qquad
 2\sum_ja_j^2=1,\qquad a_1=\Lambda_2(\alpha,d).
\]
For $m=2$, multiplication by $\sqrt{w/W}$ is a unitary map from
$L^2(\mu_d)$ to $L^2(0,1)$ and conjugates $R_\alpha$ to $T/W$.
Since $Z_2=I_{2,\alpha,d}/W^2$, the variational identity of Section~\ref{sec:model}
gives the displayed identity for $a_1$. The Riesz factorization of the
second-chaos kernel, also recorded in \cite{MoldavskayaLIL}, identifies
the sign-oriented, variance-one limit with
\begin{equation}\label{spectral:eq:law}
 Y_{\alpha,d}:=\sgn(J_2)I_2(Q_1)
 \ \stackrel{\mathrm{law}}=\ \sum_{j\ge1}a_j(Z_j^2-1),
 \qquad Z_j\ \hbox{independent }N(0,1),
\end{equation}
with $L^2$ and almost-sure convergence. The sign orientation matters
when describing a one-sided tail; the original LIL uses an absolute value.

\paragraph{Precisely when the ordinary Rosenblatt law occurs.}
For $d=0$, $T=K_\alpha$ and $Y_{\alpha,0}$ is the variance-one Rosenblatt
variable of parameter $H_R=1-\alpha$. Its expansion coefficient
$a_1$ is exactly $\Lambda_2(\alpha,0)$; this is the operator in
Veillette--Taqqu, Proposition 3.1 \cite{VT13}.
For $d\ne0$, \eqref{spectral:eq:law} instead identifies the weighted
second-chaos limit through the spectrum of $T_{\alpha,d}$. The ordinary
Rosenblatt identification here applies to $d=0$.

\begin{proposition}\label{spectral:prop:tail}
For all admissible rank-two parameters,
\begin{gather}
 \mathbb E e^{tY_{\alpha,d}}<\infty
 \quad\Longleftrightarrow\quad
 t<\frac{1}{2\Lambda_2(\alpha,d)},\qquad t\in\mathbb R,
 \label{spectral:eq:mgf-domain}\\
 \lim_{x\to\infty}\frac1x\log\mathbb P(Y_{\alpha,d}>x)
 =-\frac{1}{2\Lambda_2(\alpha,d)}.\label{spectral:eq:right-tail}
\end{gather}
The same logarithmic limit holds for $\mathbb P(|Y_{\alpha,d}|>x)$.
\end{proposition}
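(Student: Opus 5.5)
The plan is to work directly with the series representation \eqref{spectral:eq:law}, $Y=\sum_j a_j(Z_j^2-1)$, where $a_1=\Lambda_2(\alpha,d)\ge a_2\ge\cdots>0$ and $\sum_j a_j^2<\infty$. Since $T$ is a positive operator, every $a_j$ is positive, so $Y$ has no negative coefficients. For the moment generating function we use independence and the $L^2$/a.s. convergence of the series. For each $j$ with $2ta_j<1$,
\[
 \mathbb E e^{ta_j(Z_j^2-1)}=e^{-ta_j}(1-2ta_j)^{-1/2}.
\]
We then show that $\prod_j e^{-ta_j}(1-2ta_j)^{-1/2}$ converges for every $t<1/(2a_1)$. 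Taking logarithms, the $j$-th factor contributes
\[
 -ta_j-\tfrac12\log(1-2ta_j)=t^2a_j^2+O(|t|^3a_j^3).
\]
This is summable because $\sum a_j^2<\infty$, and it is uniform once $2ta_j\le 2ta_1<1$; for $t\le0$ there is no constraint at all. We then justify $\mathbb E e^{tY}=\prod_j\mathbb E e^{ta_j(Z_j^2-1)}$. For $t\ge0$ this follows from Fatou applied to the partial sums, which gives the upper bound; for $t<0$ we use $e^{tY_N}\to e^{tY}$ together with uniform integrability, which follows from the same bound at a slightly larger $|t|$. Either way we get finiteness for $t<1/(2a_1)$. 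Conversely, if $t\ge1/(2a_1)$, write $Y=a_1(Z_1^2-1)+Y'$ with $Y'$ independent of $Z_1$. Then $\mathbb E e^{tY}\ge \mathbb E e^{ta_1(Z_1^2-1)}\,\mathbb E e^{tY'}$, and the first factor is $+\infty$ while the second is positive. This proves \eqref{spectral:eq:mgf-domain}.

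For the right tail \eqref{spectral:eq:right-tail}, the upper bound follows from Chernoff's bound. For any $t<1/(2a_1)$, $\mathbb P(Y>x)\le e^{-tx}\mathbb E e^{tY}$, so $\limsup_x x^{-1}\log\mathbb P(Y>x)\le -t$. Letting $t\uparrow 1/(2a_1)$ gives the upper bound $-1/(2a_1)$.

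For the lower bound we use the same decomposition $Y=a_1(Z_1^2-1)+Y'$. Pick $M$ with $\mathbb P(Y'\ge -M)\ge 1/2$; this exists because $Y'$ is a.s. finite. By independence,
\[
 \mathbb P(Y>x)\ge \tfrac12\,\mathbb P\bigl(a_1(Z_1^2-1)>x+M\bigr)=\tfrac12\,\mathbb P\bigl(Z_1^2>1+(x+M)/a_1\bigr).
\]
The Gaussian tail estimate $\log\mathbb P(Z_1^2>u)=-u/2+O(\log u)$ then gives $\liminf_x x^{-1}\log\mathbb P(Y>x)\ge -1/(2a_1)$.

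For $|Y|$ we have $\mathbb P(|Y|>x)=\mathbb P(Y>x)+\mathbb P(Y<-x)$, so it suffices to show the left tail is strictly lighter. Since $\mathbb E e^{tY}<\infty$ for every $t<0$, Chernoff gives $\mathbb P(Y<-x)\le e^{-sx}\mathbb E e^{-sY}$ for every $s>0$. Choosing $s>1/(2a_1)$ makes the left tail negligible on the logarithmic scale. (In fact the left tail is sub-Gaussian, but this crude bound is enough.) Combining the two tails, $\mathbb P(|Y|>x)$ has the same logarithmic rate $-1/(2a_1)$.

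The only step needing care is justifying the interchange of expectation and infinite product for the moment generating function, particularly on the negative side; the rest is elementary. For $t\ge0$, Fatou suffices. For $t<0$, the cleanest route is to bound $\mathbb E e^{tY_N}$ uniformly in $N$ at a larger exponent and invoke uniform integrability. Since $a_1=\Lambda_2(\alpha,d)$ by Section~\ref{sec:ranktwo}, the result follows.
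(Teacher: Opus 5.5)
Your proposal is correct and follows essentially the same route as the paper. You use the product formula for the moment generating function with divergence coming from the isolated first term, Chernoff for the upper rate, the decomposition $Y=a_1(Z_1^2-1)+Y'$ with $\mathbb P(Y'\ge -M)>0$ for the lower rate, and Chernoff applied to $-Y$ to make the left tail negligible.

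The one variation is that you use Fatou for $t\ge0$. This gives only $\mathbb E e^{tY}\le\prod_j e^{-ta_j}(1-2ta_j)^{-1/2}$, not the equality you announce. Finiteness is all the statement needs, so your argument is complete. The paper instead uses uniform integrability at $pt$ with $p>1$ for every $t<(2a_1)^{-1}$, which also yields the exact log-MGF identity it later uses for the cumulants.
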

\begin{proof}
For $t<(2a_1)^{-1}$, independence gives
\begin{equation}\label{spectral:eq:log-mgf}
 \log\mathbb E e^{tY_{\alpha,d}}
 =\sum_{j\ge1}\left[-ta_j-\frac12\log(1-2ta_j)\right].
\end{equation}
The summands are $O(a_j^2)$ in the tail of this sum. To justify the
expectation for the infinite series, choose $p>1$ with $pt<(2a_1)^{-1}$
when $t>0$; for $t\le0$ any fixed $p>1$ works. The same convergent
product at $pt$ gives uniform integrability of the finite exponentials.
For $t\ge(2a_1)^{-1}$, separate the first term of \eqref{spectral:eq:law};
its exponential moment is infinite, and independence gives divergence
for the whole sum.

For $0<t<(2a_1)^{-1}$, Chernoff's inequality gives the upper logarithmic
rate $-t$, hence at most $-(2a_1)^{-1}$. Write
$Y_{\alpha,d}=a_1(Z_1^2-1)+W$, where $W$ is independent of $Z_1$.
Choose $M$ with $\mathbb P(W\ge-M)>0$. Then
\[
 \mathbb P(Y_{\alpha,d}>x)\ge
 \mathbb P(W\ge-M)\mathbb P\left(Z_1^2>
                \frac{x+M+a_1}{a_1}\right),
\]
which gives the matching lower logarithmic rate by the normal tail.
Finally, \eqref{spectral:eq:mgf-domain} holds for every negative $t$; applying
Chernoff to $-Y_{\alpha,d}$ yields
$\limsup x^{-1}\log\mathbb P(Y_{\alpha,d}<-x)=-\infty$.
\end{proof}

For $d=0$, \cite[Corollary 4.5]{VT13} gives the stronger shifted-tail
ratio $\mathbb P(Y>x+u)/\mathbb P(Y>x)\to e^{-u/(2a_1)}$.
Proposition~\ref{spectral:prop:tail} is an elementary application of positive
second-chaos spectral theory to the weighted kernel, not a claim of
a new general tail theorem. It supplies a probabilistic interpretation of the LIL constant.

\section{Evaluated spectral moments and the transform}\label{sec:traces}
Let $\mathcal A=T/\sqrt{2I_{2,\alpha,d}}$. In the disk
$|t|<(2\Lambda_2)^{-1}$, \eqref{spectral:eq:log-mgf} becomes
\begin{equation}\label{spectral:eq:trace-series}
 \log\mathbb E e^{tY_{\alpha,d}}
 =\sum_{r=2}^{\infty}\frac{2^{r-1}}r
       t^r\tr(\mathcal A^r),\qquad
 \kappa_r(Y_{\alpha,d})=2^{r-1}(r-1)!\tr(\mathcal A^r).
\end{equation}
Equivalently the moment generating function is
$\det{}_2(I-2t\mathcal A)^{-1/2}$. The regularized determinant is
needed since Hilbert--Schmidt structure, rather than trace class,
is guaranteed. The determinant $\det{}_2(I-2t\mathcal A)$ has its
first positive zero at $(2\Lambda_2)^{-1}$. Naming this determinant
alone is not a closed-form evaluation of that zero.

\begin{proposition}[An evaluated weighted third spectral moment]
Put $a=1-d$ and $c=-\alpha/2$. Then
\begin{equation}\label{spectral:eq:selberg3}
 \boxed{\tr(T^3)=
 \prod_{j=0}^{2}
 \frac{\Gamma(a+jc)\Gamma(1+jc)\Gamma(1+(j+1)c)}
      {\Gamma(a+1+(2+j)c)\Gamma(1+c)}.}
\end{equation}
Consequently the skewness of the sign-oriented weighted limit is
\[
 \kappa_3(Y_{\alpha,d})
 =\frac{8\tr(T^3)}{(2I_{2,\alpha,d})^{3/2}}.
\]
At $d=0$, the expression reduces to
\begin{equation}\label{spectral:eq:beta3}
 \tr(K_\alpha^3)=\frac{6B(1-\alpha,1-\alpha)}
                   {(2-3\alpha)(3-3\alpha)}.
\end{equation}
\end{proposition}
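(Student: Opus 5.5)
The plan is to write $\tr(T^3)$ as a triple integral and recognize it as a Selberg integral with $n=3$. Since $T$ has a nonnegative symmetric kernel $k(x,y)=x^{-d/2}y^{-d/2}|x-y|^{-\alpha}$ and is Hilbert--Schmidt, I will write $\tr(T^3)=\langle T,T^2\rangle_{\mathrm{HS}}=\iint k(x,y)k^{(2)}(y,x)\dd x\dd y$. Here $k^{(2)}(y,x)=\int_0^1 k(y,z)k(z,x)\dd z$ is the kernel of $T^2$. Because every integrand is nonnegative, Tonelli's theorem lets me expand this as
\[
 \tr(T^3)=\int_{[0,1]^3}(xyz)^{-d}\,|x-y|^{-\alpha}|y-z|^{-\alpha}|z-x|^{-\alpha}\dd x\dd y\dd z .
\]
Each variable appears in exactly two kernel factors, so it collects the weight $x^{-d/2}\cdot x^{-d/2}=x^{-d}$. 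The product of the three pairwise distances is $|\Delta(x,y,z)|^{-\alpha}=|\Delta|^{2c}$ with $c=-\alpha/2$.

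This integral is Selberg's $S_3(a,b,\gamma)$ with $a-1=-d$, $b-1=0$ and $\gamma=c$, that is, $a=1-d$, $b=1$, $\gamma=c$. Selberg's formula
\[
 S_n(a,b,\gamma)=\prod_{j=0}^{n-1}\frac{\Gamma(a+j\gamma)\Gamma(b+j\gamma)\Gamma(1+(j+1)\gamma)}{\Gamma(a+b+(n+j-1)\gamma)\Gamma(1+\gamma)}
\]
with $n=3$, $b=1$ gives exactly \eqref{spectral:eq:selberg3}. The step needing care is checking Selberg's convergence conditions, since $\gamma$ is negative:
\[
 \operatorname{Re}a>0,\qquad \operatorname{Re}b>0,\qquad \gamma>-\min\Bigl(\tfrac1n,\ \tfrac{a}{n-1},\ \tfrac{b}{n-1}\Bigr).
\]
These conditions reduce to the following checks.
\begin{itemize}
\item $a=1-d>0$ holds because $2d<1-2\alpha<1$.
\item $-\alpha/2>-1/3$ holds because $\alpha<1/2$.
\item $-\alpha/2>-b/2=-1/2$ holds for the same reason.
\item $-\alpha/2>-(1-d)/2$, i.e. $\alpha<1-d$, follows from $\alpha<\tfrac12-d<1-d$.
\end{itemize}
Hence the formula applies on the whole admissible region. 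The finiteness of the integral is also consistent with $T$ being Hilbert--Schmidt.

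The cumulant statement follows directly from \eqref{spectral:eq:trace-series} with $r=3$:
\[
 \kappa_3=2^{2}\cdot2!\,\tr(\mathcal A^3)=\frac{8\tr(T^3)}{(2I_{2,\alpha,d})^{3/2}},
\]
since $\mathcal A=T/\sqrt{2I_{2,\alpha,d}}$. Because $Y_{\alpha,d}$ has variance one, this is also the skewness.

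For \eqref{spectral:eq:beta3}, I set $a=1$ and simplify the product factor by factor, using $2c=-\alpha$.
\begin{itemize}
\item The $j=0$ factor is $1/\Gamma(2-\alpha)$.
\item The $j=1$ factor is $\Gamma(1+c)\Gamma(1-\alpha)/\Gamma(2+3c)$.
\item The $j=2$ factor is $\Gamma(1-\alpha)^2\Gamma(1+3c)/\bigl(\Gamma(2-2\alpha)\Gamma(1+c)\bigr)$.
\end{itemize}
In the product, the $\Gamma(1+c)$ factors cancel. Using $\Gamma(2+3c)=(1+3c)\Gamma(1+3c)$ and $\Gamma(2-\alpha)=(1-\alpha)\Gamma(1-\alpha)$, the product becomes
\[
 \frac{\Gamma(1-\alpha)^2}{(1-\alpha)(1+3c)\Gamma(2-2\alpha)}=\frac{B(1-\alpha,1-\alpha)}{(1-\alpha)(1-3\alpha/2)}.
\]
Since $(1-\alpha)(1-3\alpha/2)=(3-3\alpha)(2-3\alpha)/6$, this equals $6B(1-\alpha,1-\alpha)/\bigl((2-3\alpha)(3-3\alpha)\bigr)$.

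The main obstacle is only bookkeeping: matching the paper's parametrization to the standard Selberg exponents, and verifying the convergence range for negative $\gamma$ uniformly on the admissible region.
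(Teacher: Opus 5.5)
Your proof is correct and follows the paper's route: you write $\tr(T^3)$ as the three-cycle integral, recognize it as Selberg's $S_3(1-d,1,-\alpha/2)$, and verify $c>-\min(1/3,a/2,1/2)$ from $\alpha<1/2$ and $a=1-d>\alpha$. The differences are minor. You justify the trace--integral identity via $\tr(T\cdot T^2)=\langle T,T^2\rangle_{\mathrm{HS}}$ and Tonelli, where the paper uses bounded truncations with monotone and Hilbert--Schmidt convergence. You also obtain the $d=0$ formula by simplifying the gamma product, whereas the paper integrates directly after ordering the variables, which gives a check independent of Selberg's formula.
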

\begin{proof}
The trace integral is
\[
 \tr(T^3)=\int_{[0,1]^3}
 (xyz)^{-d}|x-y|^{-\alpha}|y-z|^{-\alpha}|z-x|^{-\alpha}
 \dd x\dd y\dd z.
\]
This is precisely Selberg's integral $S_3(a,1,c)$, which gives
\eqref{spectral:eq:selberg3}; see \cite[Eq.~5.14.4, with no extra monomial]{DLMF}.
The convergence condition is $c>-\min(1/3,a/2,1/2)$.
It holds because $\alpha<1/2$ and $d+\alpha<1/2$, hence $a>\alpha$.
The trace identity follows first for bounded truncations of the
nonnegative kernel and then by monotone convergence and convergence
in Hilbert--Schmidt norm.
For $d=0$, order the three variables, write their span as $r$ and
the middle relative position as $u$, and integrate the left endpoint.
The result is
$6\int_0^1r^{1-3\alpha}(1-r)\dd r
\int_0^1[u(1-u)]^{-\alpha}\dd u$, yielding \eqref{spectral:eq:beta3}.
\end{proof}

\paragraph{Why the direct Selberg summation fails at the next order.}
For $r\ge2$ the trace integrand is a cycle:
\[
 \tr(T^r)=\int_{[0,1]^r}\prod_{i=1}^rx_i^{-d}
       \prod_{i=1}^r|x_i-x_{i+1}|^{-\alpha}\dd x_1\cdots\dd x_r,
 \qquad x_{r+1}=x_1.
\]
At $r=3$ the cycle is the complete graph. At $r=4$ the Selberg
integrand has two additional factors,
$|x_1-x_3|^{-\alpha}|x_2-x_4|^{-\alpha}$.
Both are strictly greater than one almost everywhere on the unit cube.
The Selberg integral is finite in the present parameter region, so
\[
 \tr(T^4)<S_4(1-d,1,-\alpha/2).
\]
Thus one cannot insert the Selberg gamma product as every coefficient
of \eqref{spectral:eq:trace-series}. The exact third moment does not determine
the nearest singularity of the whole transform. Other identities for
cycle integrals or a summation of their generating function remain
possible, but neither has been established here.

\section{The unweighted memory boundary and a certified interval}
\label{sec:interval-certificate}
Write $K_\alpha f(x)=\int_0^1|x-y|^{-\alpha}f(y)\dd y$ and
$K_*=K_{1/2}$. The auxiliary operator $K_*$ is bounded, compact and
positive, although $\tr(K_*^2)=\infty$. Indeed, deletion of
$|x-y|<\delta$ gives a Hilbert--Schmidt kernel and changes the operator
norm by at most $4\sqrt\delta$, by Schur's test. Positivity follows from
the positive Laplace mixture of the positive definite kernels
$e^{-t|x-y|}$. The strictly positive kernel makes the leading
eigenvalue simple with a positive eigenfunction. Denote it by $\lambda_*$.

\begin{proposition}[Unweighted boundary coefficient]\label{prop:interval-boundary}
As $\alpha\uparrow1/2$ through $0<\alpha<1/2$,
\[
 \Lambda_2(\alpha,0)=C_{2,0}\sqrt{1-2\alpha}
 [1+O(1-2\alpha)],\qquad C_{2,0}=\lambda_*/2.
\]
This is an asymptotic of the subcritical LIL constant; it is not a LIL
at $\alpha=1/2$.
\end{proposition}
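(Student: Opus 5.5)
At $d=0$ the formula of Section~\ref{sec:ranktwo} reads $\Lambda_2(\alpha,0)=\lambda_{\max}(K_\alpha)/\sqrt{2I_{2,\alpha,0}}$, and the plan is to expand the numerator and the denominator separately as $\alpha\uparrow1/2$. Put $\varepsilon=1-2\alpha$.

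\emph{Denominator.} By \eqref{quad:eq:I} with $d=0$,
\[
I_{2,\alpha,0}=\frac{2B(1,1-2\alpha)}{2-2\alpha}=\frac{2}{(1-2\alpha)(2-2\alpha)}=\frac{2}{\varepsilon(1+\varepsilon)}.
\]
Hence $\sqrt{2I_{2,\alpha,0}}=2\varepsilon^{-1/2}(1+\varepsilon)^{-1/2}$, and
\[
\Lambda_2(\alpha,0)=\tfrac12\sqrt{\varepsilon}\,(1+\varepsilon)^{1/2}\lambda_{\max}(K_\alpha).
\]
Since $(1+\varepsilon)^{1/2}=1+O(\varepsilon)$, it remains to show $\lambda_{\max}(K_\alpha)=\lambda_*+O(\varepsilon)$.

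\emph{Numerator.} Both $K_\alpha$ and $K_*$ are positive, so their largest eigenvalues equal their operator norms. Therefore
\[
|\lambda_{\max}(K_\alpha)-\lambda_*|\le\|K_*-K_\alpha\|.
\]
I would bound this norm by Schur's test, as in the compactness argument above. The kernel difference is $k(s)=s^{-1/2}-s^{-\alpha}\ge0$ with $s=|x-y|\in(0,1]$, so the norm is at most
\[
\sup_x\int_0^1 k(|x-y|)\dd y\le 2\int_0^1\bigl(s^{-1/2}-s^{-\alpha}\bigr)\dd s=2\Bigl(2-\frac1{1-\alpha}\Bigr)=\frac{2(1-2\alpha)}{1-\alpha}=O(\varepsilon).
\]
This gives $\lambda_{\max}(K_\alpha)=\lambda_*\,(1+O(\varepsilon))$ because $\lambda_*>0$.

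Combining the two expansions yields $\Lambda_2(\alpha,0)=\tfrac{\lambda_*}{2}\sqrt{1-2\alpha}\,[1+O(1-2\alpha)]$, i.e.\ $C_{2,0}=\lambda_*/2$.

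I do not expect a serious obstacle. The one point to check is that the Schur bound is genuinely linear in $\varepsilon$, with no logarithm. This holds because $\int_0^1(s^{-1/2}-s^{-\alpha})\dd s$ is computed exactly and both exponents stay integrable uniformly near $1/2$. A pointwise estimate $k(s)\approx(1/2-\alpha)\,s^{-1/2}\log(1/s)$ would also integrate to $O(\varepsilon)$, so either route works.

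Positivity of $K_\alpha$ and of $K_*$ (the Laplace mixture argument recalled above) is what justifies replacing $\lambda_{\max}$ by the operator norm. The remark that no LIL is asserted at $\alpha=1/2$ needs no proof, since $K_*$ enters only as a limit operator.
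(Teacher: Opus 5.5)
Your proposal is correct and follows essentially the same route as the paper: the exact normalization $\Lambda_2(\alpha,0)=\tfrac12\sqrt{\varepsilon(1+\varepsilon)}\,\lambda_{\max}(K_\alpha)$, combined with an $O(\varepsilon)$ operator-norm bound on $K_*-K_\alpha$ that the variational principle transfers to the leading eigenvalue. The only difference is cosmetic. The paper uses a local Schur estimate for the logarithmic kernel $|x-y|^{-b}|\log|x-y||$, whereas you integrate the nonnegative kernel difference exactly, which gives the explicit bound $\|K_*-K_\alpha\|\le 4\varepsilon/(1+\varepsilon)$.
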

\begin{proof}
For $\alpha$ in a neighborhood of $1/2$, the local Schur estimate for
$|x-y|^{-b}|\log|x-y||$, with $b<1$, gives
$\|K_\alpha-K_*\|=O(|\alpha-1/2|)$.
The variational principle yields the same Lipschitz bound for the leading
eigenvalue. With $\varepsilon=1-2\alpha$ the exact normalization is
\[
 \Lambda_2(\alpha,0)
 =\sqrt{\frac{(1-2\alpha)(1-\alpha)}2}\lambda_{\max}(K_\alpha)
 =\frac{\sqrt\varepsilon}2\sqrt{1+\varepsilon}
     [\lambda_*+O(\varepsilon)].
\]
This proves the result.
\end{proof}

\begin{lemma}[An analytic bound for the remaining spectrum]
\label{lem:analytic-deflation}
The second eigenvalue satisfies
\[
 \lambda_2(K_*)\le a_{\rm S}:=\frac{9+\sqrt3}{8}.
\]
\end{lemma}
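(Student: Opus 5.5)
The plan is to use Weyl's inequality for a rank-one perturbation instead of computing $\lambda_2(K_*)$ directly. For any constant $c\ge0$, let $E_c$ be the rank-one operator with constant kernel $c$, that is, $E_cf=c\langle f,\mathbf 1\rangle\mathbf 1$ on $L^2(0,1)$. Then $K_*=(K_*-E_c)+E_c$ with $E_c\ge0$ of rank one. The min--max principle, with test subspaces orthogonal to $\mathbf 1$, gives
\[
 \lambda_2(K_*)\le \sup_{f\perp\mathbf 1,\ \|f\|=1}\langle K_*f,f\rangle
 =\sup_{f\perp\mathbf 1,\ \|f\|=1}\langle (K_*-E_c)f,f\rangle\le \|K_*-E_c\|.
\]
So it suffices to find one $c$ for which the operator with kernel $k_c(x,y)=|x-y|^{-1/2}-c$ has norm at most $(9+\sqrt3)/8$.

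To bound $\|K_*-E_c\|$ I would use Schur's test with constant weight, as the paper already does for $K_*$ in Section~\ref{sec:interval-certificate}. Since the kernel is symmetric, this reduces to
\[
 \|K_*-E_c\|\le\sup_{x\in[0,1]}\int_0^1\bigl||x-y|^{-1/2}-c\bigr|\dd y .
\]
Take $c\ge1$ and put $r=c^{-2}\le1$, the radius inside which $k_c>0$. The integral splits into the region $|x-y|<r$, where the integrand is positive, and the region $|x-y|>r$, where it is negative. Each piece is elementary, built from $\int_0^s u^{-1/2}\dd u=2\sqrt s$ and linear terms in $c$. This gives an explicit piecewise expression $F_c(x)$, symmetric about $x=1/2$, with breakpoints where $x$ or $1-x$ crosses $r$.

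I would then maximize $F_c(x)$ over $x\in[0,1/2]$ and minimize the result over $c$. I expect the worst $x$ to be either an endpoint or the midpoint, depending on whether $r<1/2$. The optimal $c$ should balance these two candidates, and solving that balance equation (a quadratic in $\sqrt r$) is where I expect the surd $\sqrt3$ to appear, giving $(9+\sqrt3)/8$.

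The main obstacle is this last optimization. The case analysis in $x$ relative to $r$ and $1-r$ must be done carefully so that no interior maximum is missed. If the constant-weight Schur test with a constant subtraction does not reach $(9+\sqrt3)/8$, my first fallback is a nonconstant Schur weight such as $(x(1-x))^{-1/4}$. The second fallback is to exploit the reflection symmetry $x\mapsto1-x$: the leading eigenfunction is even, so one bounds the odd block and the compressed even block separately.
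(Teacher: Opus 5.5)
Your reduction is the paper's own: the paper takes $c=2$, i.e.\ works with $K_*-2P$ where $P$ projects onto $\mathbf 1$, then uses min--max on $\mathbf 1^\perp$ and Schur's test with constant weight. The framework is therefore fine. The gap is that the proposal stops before the only step with real content, the bound on the row integral, and your predictions for that step are wrong in a way that would derail it.

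Write $F_c(x)=\int_0^1\bigl||x-y|^{-1/2}-c\bigr|\dd y$ and $r=c^{-2}$. If $c\le\sqrt2$, the maximum is $F_c(1/2)=2\sqrt2-c\ge\sqrt2>a_{\rm S}$, so this regime cannot work. If $c>\sqrt2$, then $F_c$ decreases on $[r,1/2]$, while on $[0,r]$
\[
 F_c(x)=\frac2c+c-2cx+2\sqrt x-2\sqrt{1-x}
\]
is strictly concave. Its maximum is at the interior point where $x^{-1/2}+(1-x)^{-1/2}=2c$. So in the relevant regime the worst $x$ is neither an endpoint nor the midpoint.

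Balancing $F_c(0)=2/c+c-2$ against $F_c(1/2)=4/c+c-2\sqrt2$ gives $c=1+\sqrt2$ and the value $3\sqrt2-3\approx1.243$. But at that $c$ one already has $F_c(0.07)\approx1.505>a_{\rm S}\approx1.3415$. Following your plan would thus produce a false intermediate claim, and once corrected, a bound too weak for the lemma.

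Nor is $(9+\sqrt3)/8$ the root of any balance equation in $c$. In the paper, $c=2$ and $r=1/4$. On $[1/4,1/2]$ the row integral is $4-2\sqrt x-2\sqrt{1-x}\le3-\sqrt3$. On $[0,1/4]$ it is $3-4x+2\sqrt x-2\sqrt{1-x}$, and its interior maximum is handled by the chord inequality $\sqrt{1-x}\ge1-(4-2\sqrt3)x$ on $[0,1/4]$. The $\sqrt3$ comes from $\sqrt{1-1/4}$, not from optimizing in $c$. The chord bound leaves $1+2\sqrt x-4(\sqrt3-1)x$, and completing the square in $\sqrt x$ gives $1+1/(4(\sqrt3-1))=(9+\sqrt3)/8$.

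The true supremum at $c=2$ is about $1.337$, and optimizing over $c$ gives about $1.300$ near $c\approx1.75$. So your family does prove the lemma with some margin, provided you locate and bound the interior critical point rather than compare endpoint values. The fallbacks you list (a nonconstant Schur weight, parity splitting) are unnecessary.
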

\begin{proof}
Let $P$ be the orthogonal projection onto $\mathbf1$ in $L^2(0,1)$.
On $\mathbf1^\perp$, the quadratic forms of $K_*$ and $K_*-2P$
coincide. Min--max and Schur's test therefore give
\[
 \lambda_2(K_*)\le\|K_*-2P\|
 \le\sup_x\int_0^1\bigl||x-y|^{-1/2}-2\bigr|\dd y.
\]
Put $H(t)=\int_0^t|u^{-1/2}-2|\dd u$. Direct integration gives
\[
 H(t)=\begin{cases}
 2\sqrt t-2t,&0\le t\le1/4,\\
 2t-2\sqrt t+1,&1/4\le t\le1.
 \end{cases}
\]
The row integral is $H(x)+H(1-x)$ and is symmetric about $1/2$.
For $1/4\le x\le1/2$ it is at most $3-\sqrt3<a_{\rm S}$.
For $0\le x\le1/4$, concavity gives the chord bound
$\sqrt{1-x}\ge1-(4-2\sqrt3)x$. Hence the row integral is at most
\[
 3-4x+2\sqrt x-2\sqrt{1-x}
 \le1+2\sqrt x-4(\sqrt3-1)x
 \le1+\frac1{4(\sqrt3-1)}=a_{\rm S}.
\]
The last inequality is completion of the square in $\sqrt x$.
The multiplier 2 in $K_*-2P$ need not be a Rayleigh quotient.
\end{proof}

\begin{proposition}[Full-operator residual certificate]
\label{prop:interval-residual}
For nonzero real $g\in L^2(0,1)$ define
\[
 \theta=\frac{\langle g,K_*g\rangle}{\|g\|_2^2},\qquad
 \sigma^2=\frac{\|K_*g-\theta g\|_2^2}{\|g\|_2^2}.
\]
If $\theta>a_{\rm S}$, then
\begin{equation}\label{eq:interval-residual}
 \theta\le\lambda_*\le\theta+
                   \frac{\sigma^2}{\theta-a_{\rm S}}.
\end{equation}
For any real shift $\tau$, the squared residual with $\theta$ is at most
$\|K_*g-\tau g\|_2^2/\|g\|_2^2$.
\end{proposition}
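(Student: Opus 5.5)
The argument is a Temple-type residual bound adapted to a compact self-adjoint operator with a simple top eigenvalue, using Lemma~\ref{lem:analytic-deflation} in place of a computed gap. Normalize $\|g\|_2=1$. The lower bound $\theta\le\lambda_*$ is the variational characterization: $K_*$ is bounded, positive and self-adjoint, so $\lambda_*=\sup_{\|f\|=1}\langle f,K_*f\rangle\ge\theta$.

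For the upper bound, let $\phi_*$ be the normalized positive eigenfunction for $\lambda_*$, set $c=\langle g,\phi_*\rangle$, and write $g=c\phi_*+h$ with $h\perp\phi_*$, so that $|c|^2+\|h\|^2=1$. On $\phi_*^\perp$ the spectrum of $K_*$ lies in $[0,\lambda_2(K_*)]\subset[0,a_{\rm S}]$ by Lemma~\ref{lem:analytic-deflation}.

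The residual splits orthogonally:
\[
 \sigma^2=|c|^2(\lambda_*-\theta)^2+\|(K_*-\theta)h\|^2 .
\]
The Rayleigh identity $\theta=|c|^2\lambda_*+\langle h,K_*h\rangle$ gives
\[
 |c|^2(\lambda_*-\theta)=\langle h,(\theta-K_*)h\rangle .
\]
Since $\theta>a_{\rm S}$, the operator $\theta-K_*$ restricted to $\phi_*^\perp$ is positive with spectrum in $[\theta-a_{\rm S},\theta]$. It follows that
\[
 \|(\theta-K_*)h\|^2\ge(\theta-a_{\rm S})\langle h,(\theta-K_*)h\rangle=(\theta-a_{\rm S})|c|^2(\lambda_*-\theta).
\]
Dropping the first, nonnegative term of $\sigma^2$ then yields
\[
 \sigma^2\ge(\theta-a_{\rm S})|c|^2(\lambda_*-\theta).
\]
This bound carries the unwanted factor $|c|^2$. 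One way to remove it is to keep the first term as well:
\[
 \sigma^2\ge(\lambda_*-\theta)\bigl[|c|^2(\lambda_*-\theta)+(\theta-a_{\rm S})|c|^2\bigr]=|c|^2(\lambda_*-\theta)(\lambda_*-a_{\rm S}).
\]
The factor $|c|^2$ still remains, so the cleaner route is the standard Kato--Temple form. Because $\theta>a_{\rm S}\ge\lambda_2$, the interval $(a_{\rm S},\lambda_*]$ contains exactly one spectral point. By the spectral theorem,
\[
 \langle g,(K_*-a_{\rm S})(K_*-\lambda_*)g\rangle\le0,
\]
since the integrand $(s-a_{\rm S})(s-\lambda_*)$ is nonpositive for $s\le\lambda_*$ and $s\ge a_{\rm S}$, while on $[0,a_{\rm S})$ it is nonnegative. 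This is the main obstacle: the spectrum below $a_{\rm S}$ contributes with the wrong sign. I would therefore use the alternative form
\[
 \langle g,(K_*-\theta)^2g\rangle\ge\min_{s\in\operatorname{spec}}(s-\theta)^2\ge\min\bigl\{(\lambda_*-\theta)^2,(\theta-a_{\rm S})^2\bigr\},
\]
which is Krylov--Weinstein. Combined with $|c|^2\ge1-\|h\|^2$ from the estimate $\|h\|^2(\theta-a_{\rm S})\le\langle h,(\theta-K_*)h\rangle+\dots$, this leads to Temple's inequality in the form $\lambda_*-\theta\le\sigma^2/(\theta-a_{\rm S})$. The direct Temple argument applies the spectral theorem to the function $(s-\theta)(s-a_{\rm S})$ on $[0,a_{\rm S}]\cup\{\lambda_*\}$.

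I expect the sign bookkeeping to be the hardest step. Here I would expand
\[
 \sigma^2-(\lambda_*-\theta)(\theta-a_{\rm S})=\langle g,(K_*-\theta)^2g\rangle-(\lambda_*-\theta)\langle g,(\theta-a_{\rm S})g\rangle
\]
and express it as $\int\bigl[(s-\theta)^2-(\lambda_*-\theta)(\theta-a_{\rm S})\bigr]\dd E_g(s)$ plus a term that vanishes by $\int(s-\theta)\dd E_g=0$. Adding $(\lambda_*-a_{\rm S})\int(s-\theta)\dd E_g=0$ converts the integrand into $(s-\lambda_*)(s-a_{\rm S})$ shifted appropriately, namely
\[
 (s-\theta)^2+(\lambda_*-a_{\rm S})(s-\theta)-(\lambda_*-\theta)(\theta-a_{\rm S})=(s-\lambda_*+\lambda_*-\theta)(s-a_{\rm S}+\dots).
\]
I would verify that this quantity equals $(s-a_{\rm S})(s-\lambda_*)+(\dots)$ and is $\ge0$ for $s\le a_{\rm S}$ and at $s=\lambda_*$. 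At $s=\lambda_*$ the quadratic becomes $(\lambda_*-\theta)^2+(\lambda_*-a_{\rm S})(\lambda_*-\theta)-(\lambda_*-\theta)(\theta-a_{\rm S})=2(\lambda_*-\theta)^2\ge0$. For $s\le a_{\rm S}$ the expression is a convex quadratic in $s$ that is decreasing there, so it suffices to check it at $s=a_{\rm S}$, where it equals $(a_{\rm S}-\theta)^2+(\lambda_*-a_{\rm S})(a_{\rm S}-\theta)-(\lambda_*-\theta)(\theta-a_{\rm S})=0$. Since the integrand is nonnegative on the spectrum, this gives $\sigma^2\ge(\lambda_*-\theta)(\theta-a_{\rm S})$, which is \eqref{eq:interval-residual}.

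The final claim follows because
\[
 \|K_*g-\tau g\|^2=\|K_*g-\theta g\|^2+(\theta-\tau)^2\|g\|^2,
\]
the cross term vanishing by the definition of $\theta$.
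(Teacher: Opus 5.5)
Your lower bound and the shift identity are fine. The upper bound, as you finally argue it, rests on an algebraic error, and the paragraph before it has the needed sign reversed.

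With your multiplier $\lambda_*-a_{\rm S}$, the integrand
\[
 q(s)=(s-\theta)^2+(\lambda_*-a_{\rm S})(s-\theta)-(\lambda_*-\theta)(\theta-a_{\rm S})
\]
has $q(a_{\rm S})=-2(\theta-a_{\rm S})(\lambda_*-\theta)$, not $0$. This is strictly negative whenever $\lambda_*>\theta$, so $q<0$ just to the left of $a_{\rm S}$. Lemma~\ref{lem:analytic-deflation} only bounds $\lambda_2(K_*)$ from above, so you cannot exclude spectrum there, and nonnegativity on the spectrum fails.

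The correct multiplier is $2\theta-\lambda_*-a_{\rm S}$. It turns the integrand into exactly $(s-\lambda_*)(s-a_{\rm S})$, and that function is not an obstacle. What you need is $\langle g,(K_*-a_{\rm S})(K_*-\lambda_*)g\rangle\ge0$, not $\le0$. This holds because the spectrum lies in $[0,a_{\rm S}]\cup\{\lambda_*\}$, where the product is nonnegative. There is no spectrum in $(a_{\rm S},\lambda_*)$, and the part below $a_{\rm S}$ contributes with the \emph{correct} sign. Expanding about $\theta$ and using $\int(s-\theta)\dd E_g(s)=0$ gives $\sigma^2+(\theta-\lambda_*)(\theta-a_{\rm S})\ge0$. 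That is the paper's entire proof.

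Your first route is also only one line from complete, and the Krylov--Weinstein detour is unnecessary. The relation $|c|^2\ge1-\|h\|^2$ you cite is just the identity $|c|^2+\|h\|^2=1$. What actually removes the factor $|c|^2$ is
\[
 (\theta-a_{\rm S})(1-|c|^2)=(\theta-a_{\rm S})\|h\|^2\le\langle h,(\theta-K_*)h\rangle=|c|^2(\lambda_*-\theta).
\]
Here the inequality uses $\theta-K_*\ge\theta-a_{\rm S}$ on $\phi_*^\perp$, and the result rearranges to $|c|^2(\lambda_*-a_{\rm S})\ge\theta-a_{\rm S}$. Combine this with your correct bound $\sigma^2\ge|c|^2(\lambda_*-\theta)(\lambda_*-a_{\rm S})$ and with $\lambda_*\ge\theta$ to get $\sigma^2\ge(\lambda_*-\theta)(\theta-a_{\rm S})$.

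Either repair closes the gap. As submitted, though, the proposal does not establish the upper bound in \eqref{eq:interval-residual}.
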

\begin{proof}
For every spectral value $t$ of $K_*$,
$(t-\lambda_*)(t-a_{\rm S})\ge0$. Integrating this inequality against
the spectral probability measure of $g/\|g\|_2$ gives
$\sigma^2\ge(\lambda_*-\theta)(\theta-a_{\rm S})$.
The lower bound is the variational principle. The shift assertion follows
from the orthogonality of $K_*g-\theta g$ and $g$.
This is the classical residual argument, specialized using the explicit
spectral bound of Lemma~\ref{lem:analytic-deflation}.
\end{proof}

\begin{theorem}[Certified unweighted boundary value]
\label{thm:interval-value}
The following outward-rounded bounds hold:
\[
 \begin{aligned}
 2.682918382150264832337066716763&<\lambda_*\\
 &<2.682918382150264832337066725454.
 \end{aligned}
\]
Consequently
\[
 \begin{aligned}
 1.341459191075132416168533358381&<C_{2,0}\\
 &<1.341459191075132416168533362727.
 \end{aligned}
\]
In particular, $\lambda_*$ rounds to
$2.6829183821502648323370667$ at 25 decimal places.
\end{theorem}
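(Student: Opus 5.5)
The plan is to apply Proposition~\ref{prop:interval-residual} to one explicit trial function $g$ and to evaluate $\theta$ and $\sigma^2$ exactly, or with rigorous interval arithmetic, on the full operator $K_*$ rather than on a discretization. The statement then follows by outward rounding. The constant $C_{2,0}=\lambda_*/2$ comes directly from Proposition~\ref{prop:interval-boundary}.

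\textbf{Choice of trial space.} I would build $g$ as a finite combination $g=\sum_{k=0}^{N}c_k\phi_k$ of functions whose images under $K_*$ are available in closed form. A natural choice is shifted Legendre or monomial polynomials in $x$, together with endpoint-singular terms such as $x^{1/2}$ and $(1-x)^{1/2}$ times polynomials. The eigenfunction of an Abel-type kernel is expected to have square-root-type singular behaviour at $0$ and $1$, so these terms should greatly improve convergence. For monomials the image is explicit:
\[
 \int_0^1|x-y|^{-1/2}y^k\dd y
 =\int_0^x(x-y)^{-1/2}y^k\dd y+\int_x^1(y-x)^{-1/2}y^k\dd y .
\]
The first integral is $x^{k+1/2}B(k+1,1/2)$. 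The second is a finite polynomial in $x$ times $(1-x)^{1/2}$, obtained by the binomial expansion of $y=x+(y-x)$. Hence $K_*\phi_k$ lies in an explicit finite-dimensional space spanned by $x^{j+1/2}$ and $x^j(1-x)^{1/2}$. The Gram matrix $\langle\phi_i,\phi_j\rangle$ and the stiffness matrix $\langle\phi_i,K_*\phi_j\rangle$ then have rational or beta-function entries. The same holds for $\|K_*\phi\|^2$-type entries, which are integrals of products of such terms and reduce to beta values.

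\textbf{Steps.} First, solve the Rayleigh--Ritz generalized eigenproblem in high precision (say 60 digits) to obtain the coefficients $c_k$. Round them to exact rationals; rigor does not depend on how they were found. Second, with these rational $c_k$, compute
\[
 \theta=\frac{c^{\top}Kc}{c^{\top}Gc},\qquad
 \sigma^2\le\frac{c^{\top}Mc-2\tau\,c^{\top}Kc+\tau^2c^{\top}Gc}{c^{\top}Gc}
\]
in ball or interval arithmetic, using the shift clause of Proposition~\ref{prop:interval-residual} with a rational $\tau$ near $\theta$. This avoids cancellation issues. Here $M_{ij}=\langle K_*\phi_i,K_*\phi_j\rangle$. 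Third, check $\theta>a_{\rm S}=(9+\sqrt3)/8\approx1.3415$, which holds comfortably since $\theta\approx2.68$. Then apply \eqref{eq:interval-residual}. The gap $\theta-a_{\rm S}\approx1.34$ means the upper bound exceeds $\theta$ by about $0.75\sigma^2$. A width of about $10^{-26}$ therefore requires $\sigma\lesssim10^{-13}$ only, which is the quadratic gain of the residual bound. Finally, divide by two and round outward.

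\textbf{Main obstacle.} The hard step is achieving $\sigma\approx10^{-13}$ with a manageable basis while computing $M$ exactly. Products such as $x^{i+1/2}\,x^{j}(1-x)^{1/2}$ give beta integrals. However, cross terms must be tracked carefully, and the true eigenfunction likely has further singular corrections (e.g.\ $x^{1/2}\log x$ or higher half-integer powers) beyond pure square-root behaviour, which could stall convergence. I would first try a basis of polynomials plus $\sqrt{x(1-x)}$ times polynomials, which is closed under $K_*$ modulo such terms, and monitor how $\sigma$ decays with $N$. If decay stalls, I would enrich the basis with $x^{j+1/2}$ and $(1-x)^{j+1/2}$ separately, keeping all inner products as exact beta values evaluated by interval arithmetic.
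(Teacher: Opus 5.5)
\textbf{Verdict: there is a genuine gap, in the step that certifies $\sigma^2$.} Your outer framework matches the paper's: an exact trial function, the full-operator Rayleigh quotient and residual, the bound $a_{\rm S}$ of Lemma~\ref{lem:analytic-deflation}, Proposition~\ref{prop:interval-residual} with a rational shift, and outward rounding. Your estimate that $\sigma\approx10^{-13}$ suffices is also right; the certificate has $\sigma^2<1.166\cdot10^{-26}$.

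The problem is your closure claim. You verify it only for integer monomials, and it fails for the endpoint-singular trial functions you yourself say are needed. For example,
\[
 K_*\bigl(y\mapsto y^{1/2}\bigr)(x)=\sqrt{1-x}
 +x\Bigl[\frac\pi2+\log\bigl(1+\sqrt{1-x}\bigr)-\frac12\log x\Bigr].
\]
Likewise $K_*\sqrt{y(1-y)}=\frac\pi2\bigl[xF(x)+(1-x)F(1-x)\bigr]$ with $F={}_2F_1(-\frac12,\frac32;2;\cdot)$, which is a combination of complete elliptic integrals. This has three consequences:
\begin{itemize}
\item ``Polynomials plus $\sqrt{x(1-x)}$ times polynomials'' is not closed under $K_*$.
\item Once $x^{j+1/2}$ or $(1-x)^{j+1/2}$ enter the basis, the entries of $M$ are integrals of products of logarithms, not beta values. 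Even the reflected stiffness entries $\langle x^a,K_*(1-y)^b\rangle$ for half-integer $a,b$ involve $\log2$; the paper's $J(a,b)$ recursion starts from $J(-\frac12,-\frac12)=2\pi+4\log2$.
\item Where your closure claim does hold, in a purely polynomial basis, the $\sqrt x$ endpoint term of the eigenfunction makes $\sigma$ decay only like $N^{-2}$. So $10^{-13}$ is out of reach.
\end{itemize}
Incidentally, the display shows that the next endpoint term of the eigenfunction is $x\log x$, not $x^{1/2}\log x$.

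What is missing is a rigorous enclosure of $\|K_*g-\tau g\|_2$ when $K_*g$ contains $\log x$ and $\log(1+\sqrt{1-x})$. The paper never forms $M$. Instead it proceeds as follows:
\begin{itemize}
\item It uses the symmetrized version of your fallback basis, $\varphi_a=x^a+(1-x)^a$ with $a$ an even integer or half-integer.
\item It obtains $K_*\varphi_a$ exactly from the recursion for $F_a$, started at $F_0$ and $F_{-1/2}$.
\item It observes that on $[0,1/2]$ the shifted residual is exactly $E(x)+\sqrt x\,O(x)+B(x)\log x$. Here $E,O$ are analytic in the unit disk, absorbing $\sqrt{1-x}$ and $\log(1+\sqrt{1-x})$, and $B$ is a polynomial.
\item It truncates the Taylor series at degree $L$ and bounds $E,O$ on $|z|=3/4$ by the explicit constant $M_0$. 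Cauchy's estimate then gives the tail bound \eqref{eq:series-tail}.
\item It evaluates the truncated norm from the moments $\int_0^{1/2}x^s\log^j x\dd x$, $j\le2$, and uses reflection for $[1/2,1]$.
\end{itemize}
You could instead try to evaluate the logarithmic entries of $M$ in closed form, but the proposal neither does this nor says how.

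Note also that the shift does not remove cancellation. The value $\sigma^2\approx10^{-26}$ arises as a difference of quantities of size $\|c\|^2$, and the paper's monomial coefficients reach $10^{239}$. Each entry must therefore be enclosed to correspondingly extreme precision, which again requires exact formulas or rigorous tail bounds for them.
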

\begin{proof}
Use the rational 128-function trial and the exact full-residual
construction in Appendix~\ref{app:interval-certificate}.
Directed interval arithmetic gives a Rayleigh interval
$[q_-,q_+]$, an upper bound $s_+$ for $\sigma^2$, and an upper bound
$a_+$ for $a_{\rm S}$, with $q_->a_+$. Proposition
\ref{prop:interval-residual} encloses the eigenvalue between
\[
 q_-,\qquad q_++\frac{s_+}{q_--a_+}.
\]
The exact dyadic endpoints and rational trial vector are included in the
source of this article, as specified in Appendix~\ref{app:certificate-records}. Integer floor and ceiling at the displayed decimal
scale give the stated endpoints. Their unrounded width is less than
$8.690\cdot10^{-27}$. Division by two gives the coefficient bounds.
The proof of the integration error bound is included in the appendix;
no agreement of successive finite matrices is used as an error estimate.
\end{proof}

For comparison, the finite third trace is
\[
 \tr(K_*^3)=6B(1/2,1/2)\int_0^1r^{-1/2}(1-r)\dd r=8\pi.
\]
It follows either by ordering the three coordinates, or by extending
\eqref{spectral:eq:beta3} with positive Laplace-mixture approximants.
These approximants increase in operator order and in their positive
three-cycle integrals, justifying both limits. Thus the alternative
spectral bound $(8\pi-q_-^3)^{1/3}$ remains valid even though
$\tr(K_*^2)$ is infinite. For the same trial and residual, the analytic
bound $a_{\rm S}$ reduces the residual correction by approximately
$34\%$ compared with this third-trace bound. This is a comparison of
certificate widths, not a closed-form evaluation of $\lambda_*$.

\section{Concentrating weights and spectral evaluation}\label{sec:concentration}
Let $0<\alpha<1/2$, $\eta=1-d$, $\kappa=\eta-\alpha>1/2$, and
$\nu(\dd u)=e^{-u}\dd u$ on $[0,\infty)$. Set
\begin{align}
 k_{\alpha,\kappa}(u,v)
 &=\left[2\kappa\sinh\left(\frac{|u-v|}{2\kappa}\right)\right]^{-\alpha},
 &k_{\alpha,\infty}(u,v)&=|u-v|^{-\alpha},\label{eq:hyperbolic-kernel}\\
 \mathcal Z_{s,\kappa}
 &=\kappa^{1-s}B(\kappa+s/2,1-s),
 &\mathcal Z_{s,\infty}&=\Gamma(1-s),\quad s=2\alpha.
 \label{eq:exponential-Z}
\end{align}
Let $S_{\alpha,\kappa}$ be the integral operator with kernel
$k_{\alpha,\kappa}$ on $L^2(\nu)$, and write
$\mathcal D_{2,\kappa}=\lambda_{\max}(S_{\alpha,\kappa})$.

\begin{proposition}[Fixed-space representation]\label{thm:concentration}
For all admissible rank-two parameters,
\begin{equation}\label{eq:concentration-exact}
 \Lambda_2(\alpha,d)=
 \frac{\mathcal D_{2,\kappa}}{\sqrt{2\mathcal Z_{2\alpha,\kappa}}}.
\end{equation}
For each fixed $0<\alpha<1/2$, as $d\to-\infty$,
\[
 \Lambda_2(\alpha,d)=\Lambda_2^{\exp}(\alpha)
 [1+O((1-d)^{-2})],\qquad
 \Lambda_2^{\exp}(\alpha)=
 \frac{\lambda_{\max}(S_{\alpha,\infty})}{\sqrt{2\Gamma(1-2\alpha)}}>0.
\]
\end{proposition}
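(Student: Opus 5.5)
The plan is to prove \eqref{eq:concentration-exact} by an explicit unitary change of variables that maps $L^2(0,1)$ onto $L^2(\nu)$ and turns $T$ into a multiple of $S_{\alpha,\kappa}$. The asymptotic statement then follows from a kernel comparison of $k_{\alpha,\kappa}$ with $k_{\alpha,\infty}$ together with a Gamma-ratio expansion.

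\emph{Step 1 (exact representation).} Substitute $x=e^{-u/\kappa}$, $y=e^{-v/\kappa}$ with $\kappa=\eta-\alpha$, and define $(Uf)(u)=f(e^{-u/\kappa})\,\kappa^{-1/2}e^{-u/(2\kappa)}e^{u/2}$. A direct check shows that $U:L^2(0,1)\to L^2(\nu)$ is unitary. Using
\[
 |x-y|=e^{-(u+v)/(2\kappa)}\,2\sinh\bigl(|u-v|/(2\kappa)\bigr),
\]
the conjugated kernel relative to $\nu$ is
\[
 \kappa^{-1}\bigl[2\sinh(|u-v|/2\kappa)\bigr]^{-\alpha}
 \exp\Bigl((u+v)\Bigl[\frac{d+\alpha-1}{2\kappa}+\frac12\Bigr]\Bigr).
\]
The exponent vanishes exactly because $\kappa=1-d-\alpha$. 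Hence $UTU^{-1}=\kappa^{\alpha-1}S_{\alpha,\kappa}$, and so $\mu_1=\kappa^{\alpha-1}\mathcal D_{2,\kappa}$. For the normalization I use $I_{2,\alpha,d}=\tr(T^2)$, which equals $\kappa^{2\alpha-2}\|S_{\alpha,\kappa}\|_{HS}^2$. I will cross-check this against \eqref{quad:eq:I}: $I_{2,\alpha,d}=B(\eta,1-2\alpha)/\kappa$ gives $\kappa^{2-2\alpha}I_{2,\alpha,d}=\kappa^{1-s}B(\kappa+s/2,1-s)=\mathcal Z_{s,\kappa}$, since $\kappa+\alpha=\eta$. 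Substituting into $a_1=\mu_1/\sqrt{2I_{2,\alpha,d}}=\Lambda_2(\alpha,d)$ from Section~\ref{sec:ranktwo}, the powers of $\kappa$ cancel and \eqref{eq:concentration-exact} follows.

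\emph{Step 2 (kernel comparison).} Put $r=|u-v|$ and $z=r/(2\kappa)$. Since $\sinh z\ge z$, we have $0\le k_{\alpha,\infty}-k_{\alpha,\kappa}$. Moreover $\log(\sinh z/z)\le z^2/6$ and $1-e^{-w}\le w$, so
\[
 0\le k_{\alpha,\infty}-k_{\alpha,\kappa}
 \le \alpha r^{-\alpha}\frac{r^2}{24\kappa^2}.
\]
The Hilbert--Schmidt norm in $L^2(\nu\otimes\nu)$ of the right-hand side is $O(\kappa^{-2})$, because $\iint r^{4-2\alpha}e^{-u-v}\dd u\dd v<\infty$. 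This bounds the operator norm of the difference. The variational principle then gives $|\mathcal D_{2,\kappa}-\lambda_{\max}(S_{\alpha,\infty})|=O(\kappa^{-2})$.

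\emph{Step 3 (properties of the limit operator).} $S_{\alpha,\infty}$ is Hilbert--Schmidt, because $\iint|u-v|^{-2\alpha}e^{-u-v}\dd u\dd v=\Gamma(1-2\alpha)<\infty$. Its kernel is strictly positive, so $\lambda_{\max}(S_{\alpha,\infty})>0$; for instance, test against $\mathbf 1$.

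\emph{Step 4 (normalization, the main obstacle).} Here I need $\mathcal Z_{s,\kappa}=\Gamma(1-s)[1+O(\kappa^{-2})]$ rather than merely $O(\kappa^{-1})$. I will write $\kappa^{1-s}\Gamma(\kappa+s/2)/\Gamma(\kappa+1-s/2)$ and apply the standard expansion $\Gamma(z+a)/\Gamma(z+b)=z^{a-b}\bigl[1+\tfrac{(a-b)(a+b-1)}{2z}+O(z^{-2})\bigr]$. With $a=s/2$ and $b=1-s/2$ we get $a+b-1=0$, so the first-order term vanishes.

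Combining Steps 2 and 4, and using $\kappa=(1-d)-\alpha\asymp 1-d$ for fixed $\alpha$, gives the stated relative error $O((1-d)^{-2})$.
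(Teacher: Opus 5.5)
Your proposal is correct and follows essentially the paper's route: the exponential substitution $x=e^{-u/\kappa}$ cancels the weight factors exactly, the bound $\sinh z/z\le e^{z^2/6}$ controls the kernel difference in Hilbert--Schmidt norm, and the constant trial gives positivity of the limit. You conjugate $T$ on $L^2(0,1)$ and use $a_1=\Lambda_2$, whereas the paper conjugates the variational problem on $L^2(\mu_d)$ directly, but this is the same map up to a composed unitary; the one real deviation is the normalization, where you use the Gamma-ratio expansion with $a+b=1$, while the paper integrates the same hyperbolic kernel inequality with exponent $2\alpha$ to get the explicit, nonasymptotic bound $0\le\Gamma(1-s)-\mathcal Z_{s,\kappa}\le s\Gamma(3-s)/(24\kappa^2)$, which it later reuses uniformly near $s=1$ for the joint endpoint theorem.
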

\begin{proof}
The map
\[
 f(e^{-u/\kappa})=(\kappa/\eta)^{1/2}e^{\alpha u/(2\kappa)}h(u)
\]
is unitary from $L^2(\nu)$ onto $L^2(\mu_d)$, since
$\mu_d(\dd x)=(\eta/\kappa)e^{-\eta u/\kappa}\dd u$.
Using
\[
 |e^{-u/\kappa}-e^{-v/\kappa}|^{-\alpha}
 =\kappa^\alpha e^{\alpha(u+v)/(2\kappa)}k_{\alpha,\kappa}(u,v)
\]
in the quadratic form gives $D_2=\eta\kappa^{\alpha-1}
\mathcal D_{2,\kappa}$. Similarly
$Z_2=\eta^2\kappa^{2\alpha-2}\mathcal Z_{2\alpha,\kappa}$.
The factors cancel in the normalized quotient.
For independent $U,V$ with law $\nu$, $|U-V|$ has density $e^{-t}$.
Integration followed by $z=e^{-t/\kappa}$ gives
\eqref{eq:exponential-Z}.

The elementary inequality $1\le\sinh z/z\le e^{z^2/6}$ gives
\begin{equation}\label{eq:hyperbolic-difference}
 0\le t^{-\alpha}-[2\kappa\sinh(t/(2\kappa))]^{-\alpha}
 \le\frac{\alpha}{24\kappa^2}t^{2-\alpha}.
\end{equation}
Consequently
\begin{align}
 \|k_{\alpha,\infty}-k_{\alpha,\kappa}\|_{L^2(\nu\otimes\nu)}
 &\le\frac{\alpha\sqrt{\Gamma(5-2\alpha)}}{24\kappa^2},
 \label{eq:kernel-rate}\\
 0\le\Gamma(1-s)-\mathcal Z_{s,\kappa}
 &\le\frac{s\Gamma(3-s)}{24\kappa^2}.\label{eq:normalization-rate}
\end{align}
The first inequality bounds the difference of the leading eigenvalues.
Positivity of a maximizing function gives the correct order of those
eigenvalues. Their limiting value is positive, as the constant trial has
Rayleigh value $\Gamma(1-\alpha)$. The normalization has a positive
limit as well. Substitution proves the assertion. The companion paper
\cite{MoldavskayaGeneral} proves the corresponding statement for all ranks.
\end{proof}

\subsection{Rank two: an explicit matrix generating function}
Now let $m=2$ and $0<\alpha<1/2$. For
$\kappa=1-d-\alpha\in(1/2,\infty]$, let $S_{\alpha,\kappa}$
be the operator on $L^2(\nu)$ with kernel
$k_{\alpha,\kappa}$. It is positive and Hilbert--Schmidt; at finite
$\kappa$ positivity also follows from the preceding isometry.
Its largest eigenvalue is $\mathcal D_{2,\kappa}$.

Let $\mathcal L_n(u)=L_n^{(0)}(u)$ be the standard Laguerre
polynomials, an orthonormal basis of $L^2(\nu)$, and put
$A_{ij}^{(\kappa)}=\langle\mathcal L_i,
S_{\alpha,\kappa}\mathcal L_j\rangle_{L^2(\nu)}$.
Define the scalar Laplace transform
\begin{equation}\label{eq:kernel-Laplace}
 \ell_{\alpha,\kappa}(c)=
 \begin{cases}
 \kappa^{1-\alpha}B(\kappa c+\alpha/2,1-\alpha),&\kappa<\infty,\\
 \Gamma(1-\alpha)c^{\alpha-1},&\kappa=\infty,
 \end{cases}\qquad \Re c>0.
\end{equation}

\begin{proposition}[Laguerre matrix]
\label{prop:Laguerre}
For $|z|,|w|<1$,
\begin{equation}\label{eq:Laguerre-generating}
 \sum_{i,j\ge0} A_{ij}^{(\kappa)}z^iw^j
 =\frac{\ell_{\alpha,\kappa}((1-z)^{-1})
              +\ell_{\alpha,\kappa}((1-w)^{-1})}{2-z-w}.
\end{equation}
In particular, the limiting matrix has the explicit generating function
\begin{equation}\label{eq:Laguerre-limit}
 \boxed{\displaystyle
 \sum_{i,j\ge0} A_{ij}^{(\infty)}z^iw^j
 =\Gamma(1-\alpha)
 \frac{(1-z)^{1-\alpha}+(1-w)^{1-\alpha}}{2-z-w}.}
\end{equation}
Thus every finite compression is given by a finite calculation.
\end{proposition}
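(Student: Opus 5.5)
The plan is to use the classical Laguerre generating function
\[
 \sum_{n\ge0}\mathcal L_n(u)z^n=\frac{1}{1-z}\exp\Bigl(-\frac{uz}{1-z}\Bigr),\qquad |z|<1,
\]
and to interchange the sum with the double integral defining $A^{(\kappa)}_{ij}$. Writing $G_z(u)$ for the right-hand side, the left side of \eqref{eq:Laguerre-generating} is formally $\langle G_{\bar z}, S_{\alpha,\kappa}G_w\rangle$, which equals
\[
 \frac{1}{(1-z)(1-w)}\iint_{[0,\infty)^2} k_{\alpha,\kappa}(u,v)\,e^{-pu-qv}\dd u\dd v,\qquad p=\frac1{1-z},\ q=\frac1{1-w}.
\]
Here I absorbed $e^{-u}$ from $\nu$: indeed $1+z/(1-z)=1/(1-z)$. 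The interchange is justified for $|z|,|w|<1$. Indeed $\Re p,\Re q>1/2$, the kernel grows at most like $|u-v|^{-\alpha}$, and on $L^2(\nu)$ the series $\sum\mathcal L_n z^n$ converges in norm to $G_z$ because $\|G_z\|^2_{L^2(\nu)}=\sum|z|^{2n}<\infty$. Boundedness of $S_{\alpha,\kappa}$ then gives the bilinear series, and it is absolutely convergent.

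Next I evaluate the double Laplace transform of a difference kernel $k(|u-v|)$. I split into $u>v$ and $u<v$. On $u>v$ I substitute $u=v+t$, which gives $\int_0^\infty k(t)e^{-pt}\dd t\int_0^\infty e^{-(p+q)v}\dd v=\hat k(p)/(p+q)$, with $\hat k(c)=\int_0^\infty k(t)e^{-ct}\dd t$. The other region gives $\hat k(q)/(p+q)$. Hence the integral equals $[\hat k(p)+\hat k(q)]/(p+q)$. For the prefactor, $(1-z)(1-w)(p+q)=(1-w)+(1-z)=2-z-w$, which yields exactly \eqref{eq:Laguerre-generating}, provided $\hat k=\ell_{\alpha,\kappa}$.

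It remains to identify $\hat k$. For $\kappa=\infty$, $\int_0^\infty t^{-\alpha}e^{-ct}\dd t=\Gamma(1-\alpha)c^{\alpha-1}$, and substituting $c=(1-z)^{-1}$ gives \eqref{eq:Laguerre-limit}. For finite $\kappa$, I write $2\kappa\sinh(t/2\kappa)=\kappa e^{t/2\kappa}(1-e^{-t/\kappa})$ and put $x=e^{-t/\kappa}$, so $\dd t=\kappa\,\dd x/x$. The integrand becomes $\kappa^{-\alpha}x^{\alpha/2}(1-x)^{-\alpha}x^{\kappa c}\kappa x^{-1}$. Its integral is $\kappa^{1-\alpha}B(\kappa c+\alpha/2,1-\alpha)$ for $\Re(\kappa c)+\alpha/2>0$, which matches \eqref{eq:kernel-Laplace}; this is the same computation as in the proof of \eqref{eq:exponential-Z}. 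Both sides are analytic in $c$, so the identity extends to complex $c$.

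The main obstacle is the rigorous justification of the interchange. In particular, one must check absolute integrability of $k(|u-v|)|e^{-pu-qv}|$ with $\Re p,\Re q>1/2$; this follows from $k_{\alpha,\kappa}\le|u-v|^{-\alpha}$ by \eqref{eq:hyperbolic-difference}. One must also check that conjugating $z$ in the inner product is harmless, since the matrix is real symmetric and the bilinear pairing can be used directly. The final remark, that finite compressions are computable, follows by extracting Taylor coefficients of the explicit right-hand side.
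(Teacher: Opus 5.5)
Your proposal is correct and follows the paper's proof: you use the Laguerre generating function, split the double Laplace transform of the translation kernel at $u=v$, and substitute $x=e^{-t/\kappa}$ to obtain the beta form of $\ell_{\alpha,\kappa}$. The only difference is the domain argument: you justify the interchange on the whole bidisk directly, via $L^2(\nu)$ convergence of $\sum_n\mathcal L_n z^n$ and boundedness of $S_{\alpha,\kappa}$, whereas the paper proves the identity near the origin and extends it by absolute convergence and analytic continuation; both routes are valid.
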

\begin{proof}
The classical generating function \cite[Eq.~18.12.13]{DLMF} is
\[
 \sum_{n\ge0}\mathcal L_n(u)z^n
 =(1-z)^{-1}\exp[-uz/(1-z)].
\]
For a translation kernel $k(|u-v|)$, splitting the quadrant at
$u=v$ gives
\[
 \iint e^{-au-bv}k(|u-v|)\dd u\dd v
 =\frac{\ell(a)+\ell(b)}{a+b},\qquad \Re a,\Re b>0,
\]
where $\ell(c)=\int_0^\infty e^{-ct}k(t)\dd t$.
For the hyperbolic kernel the substitution $z=e^{-t/\kappa}$
gives \eqref{eq:kernel-Laplace}. Use $a=(1-z)^{-1}$,
$b=(1-w)^{-1}$ to obtain \eqref{eq:Laguerre-generating}.
These manipulations hold first near zero. The matrix is square
summable, so its series is absolutely convergent on the bidisk;
analytic continuation proves the stated domain.
\end{proof}

For implementation, write
$q_n^{(\kappa)}=[z^n]\ell_{\alpha,\kappa}((1-z)^{-1})$.
Then
\begin{align}
 A_{00}^{(\kappa)}&=q_0^{(\kappa)},&
 A_{n0}^{(\kappa)}=A_{0n}^{(\kappa)}
 &=\tfrac12(A_{n-1,0}^{(\kappa)}+q_n^{(\kappa)}),\quad n\ge1,
 \label{eq:Laguerre-boundary}\\
 A_{ij}^{(\kappa)}&=\tfrac12(A_{i-1,j}^{(\kappa)}
                                      +A_{i,j-1}^{(\kappa)}),
 &&i,j\ge1.\label{eq:Laguerre-interior}
\end{align}
At infinity,
$q_n^{(\infty)}=\Gamma(1-\alpha)(-1)^n\binom{1-\alpha}{n}$.
At finite $\kappa$, for $n\ge1$,
\[
 q_n^{(\kappa)}
 =\sum_{j=1}^n\binom{n-1}{j-1}
      \frac{\ell_{\alpha,\kappa}^{(j)}(1)}{j!}.
\]
The derivatives are finite beta derivatives. The matrix entries need
not all be positive; positivity here is positivity of the operator.

\subsection{Convergent two-sided spectral bounds}
The first two useful trace powers are explicitly available. Set
\[
 Q_{2,\kappa}=\mathcal Z_{2\alpha,\kappa},\qquad
 Q_{3,\infty}
 =\frac{\Gamma(1-\alpha)^2\Gamma(1-3\alpha/2)}
        {\Gamma(1-\alpha/2)}.
\]
At finite $\kappa$ put
\begin{equation}\label{eq:exponential-trace-three}
 Q_{3,\kappa}=Q_{3,\infty}\kappa^{3-3\alpha}
 \prod_{j=0}^{2}
 \frac{\Gamma(\kappa+(1-j/2)\alpha)}
      {\Gamma(\kappa+1-j\alpha/2)}.
\end{equation}

\begin{proposition}[Finite matrix enclosures]
\label{prop:spectral-enclosures}
For $j=2,3$ and $\kappa\in(1/2,\infty]$,
$\tr(S_{\alpha,\kappa}^j)=Q_{j,\kappa}$, with
$Q_{2,\infty}=\Gamma(1-2\alpha)$.
Let $\theta_1^{(N)}\ge\cdots\ge\theta_N^{(N)}\ge0$ be the
eigenvalues of the $N\times N$ principal Laguerre compression.
Then, with an empty sum interpreted as zero,
\begin{align}
 \theta_1^{(N)}\le\lambda_{\max}(S_{\alpha,\kappa})
 &\le U_N:=\min_{j\in\{2,3\}}
 \left(Q_{j,\kappa}-\sum_{i=2}^N(\theta_i^{(N)})^j\right)^{1/j},
 \label{eq:spectral-enclosure}\\
 \theta_1^{(N)}\uparrow\lambda_{\max}(S_{\alpha,\kappa}),
 &\qquad U_N\downarrow\lambda_{\max}(S_{\alpha,\kappa}).
 \label{eq:spectral-enclosure-convergence}
\end{align}
Dividing both endpoints by $\sqrt{2Q_{2,\kappa}}$ gives convergent
two-sided bounds for $\Lambda_2(\alpha,d)$, or for
$\Lambda_2^{\exp}(\alpha)$ when $\kappa=\infty$.
\end{proposition}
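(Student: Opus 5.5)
The proof has three parts: the two trace identities, the enclosure (eq:spectral-enclosure), and its monotone convergence (eq:spectral-enclosure-convergence).

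\textbf{Trace identities.} For $j=2$, the Hilbert--Schmidt norm gives
\[
\tr(S_{\alpha,\kappa}^2)=\iint k_{\alpha,\kappa}^2\,\nu(\dd u)\nu(\dd v)=\mathbb E\,k_{\alpha,\kappa}(U,V)^2 .
\]
Since $|U-V|$ has density $e^{-t}$, this is exactly the computation that produced $\mathcal Z_{2\alpha,\kappa}$ in the proof of Proposition~\ref{thm:concentration}; at $\kappa=\infty$ it equals $\Gamma(1-2\alpha)$.

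For $j=3$ at finite $\kappa$, I would use the unitary map of Proposition~\ref{thm:concentration}. It conjugates $S_{\alpha,\kappa}$ to $\kappa^{1-\alpha}\eta^{-1}$ times $R_\alpha$ on $L^2(\mu_d)$, and $R_\alpha$ is in turn unitarily equivalent to $T/W$. Hence
\[
\tr(S_{\alpha,\kappa}^3)=\kappa^{3-3\alpha}\tr(T^3),
\]
which can be checked from $D_2=\eta\kappa^{\alpha-1}\mathcal D_{2,\kappa}$. The Selberg product (spectral:eq:selberg3) with $a=1-d=\kappa+\alpha$ and $c=-\alpha/2$ then gives
\[
\prod_{j}\frac{\Gamma(\kappa+\alpha-j\alpha/2)}{\Gamma(\kappa+\alpha+1-(2+j)\alpha/2)}=\prod_j\frac{\Gamma(\kappa+(1-j/2)\alpha)}{\Gamma(\kappa+1-j\alpha/2)},
\]
times the $\kappa$-free factor
\[
\prod_j\frac{\Gamma(1-j\alpha/2)\Gamma(1-(j+1)\alpha/2)}{\Gamma(1-\alpha/2)}=\frac{\Gamma(1-\alpha)^2\Gamma(1-3\alpha/2)}{\Gamma(1-\alpha/2)}=Q_{3,\infty},
\]
after a short cancellation. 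Thus the trace equals $Q_{3,\kappa}$.

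At $\kappa=\infty$ I would pass to the limit. Stirling's formula gives $Q_{3,\kappa}\to Q_{3,\infty}$, since the exponents $\sum_j[(1-j/2)\alpha-1+j\alpha/2]=3\alpha-3$ cancel $\kappa^{3-3\alpha}$. Monotonicity (eq:hyperbolic-difference) gives $k_{\alpha,\kappa}\uparrow k_{\alpha,\infty}$ pointwise. The positive three-cycle integral therefore converges by monotone convergence, and it equals the trace because of the Hilbert--Schmidt convergence (eq:kernel-rate), as in the Selberg proof.

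\textbf{Enclosure.} The lower bound is Rayleigh--Ritz: the compression $P_NSP_N$ has top eigenvalue at most $\lambda_{\max}$. For the upper bound, Cauchy interlacing for compressions of the positive compact operator gives $\theta_i^{(N)}\le\lambda_i(S)$ for each $i$. Therefore
\[
\lambda_1^j\le Q_{j,\kappa}-\sum_{i\ge2}\lambda_i^j\le Q_{j,\kappa}-\sum_{i=2}^N(\theta_i^{(N)})^j .
\]
This uses positivity of all eigenvalues, so that $j=3$ is legitimate and the dropped terms are nonnegative.

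\textbf{Convergence.} This is the main obstacle. The Laguerre polynomials are a basis, so the compressions are nested. By interlacing, each $\theta_i^{(N)}$ is nondecreasing in $N$ and converges to $\lambda_i$, because $P_NSP_N\to S$ in Hilbert--Schmidt norm. Hence $\theta_1^{(N)}\uparrow\lambda_1$.

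For $U_N$, note that
\[
\sum_{i=2}^N(\theta_i^{(N)})^j=\tr\bigl((P_NSP_N)^j\bigr)-(\theta_1^{(N)})^j .
\]
Monotonicity of $U_N$ can be seen as follows. Passing from $N$ to $N+1$, interlacing places the new eigenvalues as $\theta_i^{(N+1)}\ge\theta_i^{(N)}$, and one additional nonnegative term is added. So the subtracted sum over $i\ge2$ does not decrease, and $U_N$ does not increase.

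For the limit, $\tr((P_NSP_N)^j)\to\tr(S^j)$ for $j=2,3$: Hilbert--Schmidt convergence suffices, since $|\tr A^3-\tr B^3|$ is controlled by $\|A-B\|_{2}$ times bounded Hilbert--Schmidt norms. Combined with $\theta_1^{(N)}\to\lambda_1$, this gives $U_N\to\lambda_1$, and the minimum over $j$ preserves both the monotonicity and the limit.

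Dividing all bounds by the explicit positive constant $\sqrt{2Q_{2,\kappa}}$, and using (eq:concentration-exact), gives the stated bounds for $\Lambda_2(\alpha,d)$ and for $\Lambda_2^{\exp}(\alpha)$.
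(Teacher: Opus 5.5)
Your proposal is correct and follows essentially the paper's proof. The steps match: the unitary conjugation $S_{\alpha,\kappa}\cong\kappa^{1-\alpha}T$ with the Selberg product at finite $\kappa$, then Hilbert--Schmidt convergence plus the gamma-ratio asymptotic at $\kappa=\infty$, then min--max for the enclosure, and finally nested dense Laguerre compressions for convergence.

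The differences are minor. You obtain $U_N\to\lambda_{\max}$ from continuity of $A\mapsto\tr(A^j)$ under Hilbert--Schmidt convergence, whereas the paper uses termwise monotone convergence of $\sum_{i\ge2}(\theta_i^{(N)})^j$. Your monotone-convergence remark at $\kappa=\infty$ is unnecessary, and it is miscited: \eqref{eq:hyperbolic-difference} gives only $k_{\alpha,\kappa}\le k_{\alpha,\infty}$ and pointwise convergence, not monotonicity in $\kappa$. Monotonicity in $\kappa$ does hold, because $\sinh z/z$ is increasing.
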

\begin{proof}
The second trace is the squared kernel norm. At finite $\kappa$,
the fixed-space isometry gives
$\tr(S_{\alpha,\kappa}^3)=\kappa^{3-3\alpha}\tr(T^3)$,
where $T$ is the weighted interval operator of
Section~\ref{sec:ranktwo} and $\eta=\kappa+\alpha$.
Substitute \eqref{spectral:eq:selberg3}; simplifying its weight-free
gamma factors gives \eqref{eq:exponential-trace-three}.
Equation \eqref{eq:kernel-rate} gives Hilbert--Schmidt convergence
to $S_{\alpha,\infty}$. This implies convergence of the third
traces. The gamma-ratio asymptotic in
\eqref{eq:exponential-trace-three} yields $Q_{3,\infty}$.
Equivalently, this is the three-variable Laguerre--Selberg
integral \cite[Eq.~5.14.5, with no extra monomial]{DLMF}.

List the eigenvalues of $S_{\alpha,\kappa}$ in decreasing order
as $\lambda_i\ge0$. The min--max principle gives
$\theta_i^{(N)}\le\lambda_i$ for $i\le N$. Hence
\[
 \lambda_1^j=Q_{j,\kappa}-\sum_{i\ge2}\lambda_i^j
 \le Q_{j,\kappa}-\sum_{i=2}^N(\theta_i^{(N)})^j.
\]
This proves the enclosure. The nested Laguerre subspaces are dense,
so each fixed compressed eigenvalue increases to $\lambda_i$.
The sums in \eqref{eq:spectral-enclosure} increase to
$\sum_{i\ge2}\lambda_i^j$, by monotone convergence and min--max.
This proves both limits in \eqref{eq:spectral-enclosure-convergence}.
\end{proof}

These are exact inequalities for exact matrix eigenvalues. A numerical
implementation must also control the error in the finite matrix and
its eigensolver before reporting certified decimal endpoints. The
enclosures give a convergent evaluation procedure, not a finite
closed-form value of the largest eigenvalue. In particular, the
generating function \eqref{eq:Laguerre-limit} is a closed formula
for the matrix, not for its spectrum.

\subsection{A scalar equation with a uniform geometric remainder}
For the exponential limit, write $S_\alpha=S_{\alpha,\infty}$.
The operator remains positive and compact for $0<\alpha<1$, although
it is no longer Hilbert--Schmidt at $\alpha=1/2$.
Indeed, cutting out $|u-v|<\delta$ leaves a bounded, hence
Hilbert--Schmidt, kernel on the probability space. Schur's test bounds
the norm of the removed part by $2\delta^{1-\alpha}/(1-\alpha)$.
The same test applied to logarithmic derivatives of the kernel shows
that $\alpha\mapsto S_\alpha$ is locally analytic in operator norm
on $0<\alpha<1$. For small distances use
$t^{-b}|\log t|^j$ with $b<1$; for distances exceeding one use
the bounded function $t^{-a}(\log t)^j$ with $a>0$.

The third trace formula also holds at $\alpha=1/2$.
One justification is to approximate $t^{-\alpha}$ by
\[
 k_R(t)=\frac1{\Gamma(\alpha)}
             \int_0^R x^{\alpha-1}e^{-xt}\dd x.
\]
These give positive trace-class operators increasing in the operator
order to $S_\alpha$. Their third traces are the corresponding
three-cycle integrals. Monotone convergence of both eigenvalues
and positive integrands gives the trace identity whenever the
three-cycle integral is finite, in particular for $\alpha\le1/2$.
Thus $S_{1/2}$ belongs to the Schatten class of order three.

\begin{proposition}[An isolated scalar root]\label{prop:scalar-root}
Let $0<\alpha\le1/2$, $a_0=\Gamma(1-\alpha)$, and
\[
 Q_3=\frac{\Gamma(1-\alpha)^2\Gamma(1-3\alpha/2)}
                  {\Gamma(1-\alpha/2)},\qquad
 r_\alpha=\left(\frac{Q_3}{a_0^3}-1\right)^{1/3}.
\]
Let $P$ be the orthogonal projection onto the constant function
$\mathbf1$ in $L^2(\nu)$, $C=(I-P)S_\alpha(I-P)$ on
$\mathbf1^\perp$, and $b=(I-P)S_\alpha\mathbf1$.
Then
\[
 \norm C\le a_0r_\alpha,\qquad
 0<r_\alpha\le r_*:=
 \left(\frac{\Gamma(1/4)}{\sqrt\pi\Gamma(3/4)}-1\right)^{1/3}<1.
\]
Put $\mu_j=\ip b{C^j b}\ge0$. The leading eigenvalue $\lambda$
is the unique real solution greater than $a_0$ of
\begin{equation}\label{eq:scalar-root}
 \lambda=a_0+\sum_{j=0}^\infty\frac{\mu_j}{\lambda^{j+1}}.
\end{equation}
If $\lambda_N>a_0$ solves the equation truncated after $j=N$, then
\begin{equation}\label{eq:scalar-remainder}
 \lambda_N\uparrow\lambda,\qquad
 0\le\frac{\lambda-\lambda_N}{a_0}
 \le\frac{r_*^{N+4}}{3(1-r_*)}.
\end{equation}
In particular, the convergence rate does not deteriorate at
$\alpha=1/2$.
\end{proposition}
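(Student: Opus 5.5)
The plan is a Schur-complement (Feshbach) reduction of $S_\alpha$ onto the constant function, with every quantitative constant taken from the third trace $\tr(S_\alpha^3)=Q_3$. That trace identity holds for $0<\alpha\le1/2$ by the Laplace-mixture argument just before the statement, and for $\alpha<1/2$ also by Proposition~\ref{prop:spectral-enclosures}.

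\textbf{Step 1 (block form and the bound on $\norm C$).} Decompose $L^2(\nu)=\mathrm{span}\{\mathbf1\}\oplus\mathbf1^\perp$. Since $\ip{\mathbf1}{S_\alpha\mathbf1}=\Gamma(1-\alpha)=a_0$, we get
\[
S_\alpha=\begin{pmatrix}a_0&b^*\\ b&C\end{pmatrix}.
\]
The compression $C$ is positive, so $C\ge0$. Expanding the cube of the block operator gives
\[
\tr(S_\alpha^3)=a_0^3+3a_0\norm b^2+3\ip b{Cb}+\tr(C^3).
\]
All terms on the right are nonnegative. Since $\norm C^3\le\tr(C^3)$ for $C\ge0$, we obtain
\[
3a_0\norm b^2+\norm C^3\le Q_3-a_0^3=a_0^3r_\alpha^3 .
\]
This gives $\norm C\le a_0r_\alpha$. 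It also gives the key joint constraint $3\beta+c^3\le r_\alpha^3$, where $\beta=\norm b^2/a_0^2$ and $c=\norm C/a_0$.

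\textbf{Step 2 (bounds on $r_\alpha$).} Set
\[
g(\alpha)=\log\Gamma(1-3\alpha/2)-\log\Gamma(1-\alpha)-\log\Gamma(1-\alpha/2),
\]
so that $Q_3/a_0^3=e^{g(\alpha)}$ and $g(0)=0$. Its derivative can be written as
\[
g'(\alpha)=\bigl[\psi(1-\alpha)-\psi(1-3\alpha/2)\bigr]+\tfrac12\bigl[\psi(1-\alpha/2)-\psi(1-3\alpha/2)\bigr].
\]
Both brackets are positive because $\psi$ is strictly increasing. Hence $g$ is strictly increasing, which gives $r_\alpha>0$ and $r_\alpha\le r_{1/2}$. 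Using $\Gamma(1/2)=\sqrt\pi$, the value at $\alpha=1/2$ is exactly $r_*$. A crude numerical check shows $\Gamma(1/4)/(\sqrt\pi\,\Gamma(3/4))\approx1.669<2$, so $r_*<1$.

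\textbf{Step 3 (the scalar equation).} By Rayleigh, $\lambda\ge a_0$. In fact $\lambda>a_0$: otherwise $\mathbf1$ would be an eigenvector, forcing $b=0$ and hence $Q_3=a_0^3+\tr C^3$ with $\tr C^3=Q_3-a_0^3$. That is not contradictory by itself, so instead I argue directly that $b\ne0$, since $S_\alpha\mathbf1$ is not constant; it is also not needed for the root statement. For $x>\norm C$ define
\[
F(x)=a_0+\ip b{(x-C)^{-1}b}=a_0+\sum_j\mu_j x^{-j-1},
\]
where $\mu_j=\ip b{C^jb}\ge0$ because $C\ge0$. An eigenvector $(1,y)$ with eigenvalue $x>\norm C$ requires $y=(x-C)^{-1}b$ and $x=F(x)$. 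Conversely, every root $x=F(x)$ produces such an eigenvector. Since $a_0>\norm C$, the leading eigenvalue lies in the domain of $F$. The map $x\mapsto x-F(x)$ is strictly increasing with derivative $\ge1$, so the root greater than $a_0$ is unique and equals $\lambda$.

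\textbf{Step 4 (truncation error).} Let $F_N$ be the sum truncated after $j=N$. Then $F_N\le F_{N+1}\le F$ pointwise, and each $x-F_N(x)$ is increasing with slope $\ge1$. This gives $\lambda_N\uparrow\lambda$ and
\[
\lambda-\lambda_N\le F(\lambda)-F_N(\lambda)\le\norm b^2\sum_{j>N}\frac{\norm C^j}{a_0^{j+1}}=a_0\,\beta\,\frac{c^{N+1}}{1-c}.
\]
Inserting $\beta\le(r_\alpha^3-c^3)/3\le r_\alpha^3/3$ and $c\le r_\alpha\le r_*$ yields the bound $r_*^{N+4}/(3(1-r_*))$.

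\textbf{Main obstacle.} The delicate point is Step 1: using the single trace identity to control $\norm b$ and $\norm C$ simultaneously. This requires the third trace to be finite and correct at $\alpha=1/2$, where $S_\alpha$ is not Hilbert--Schmidt. The Schatten-3 justification given before the statement supplies exactly this.
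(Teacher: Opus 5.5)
Your proposal is correct and follows the paper's proof essentially step for step: the same block expansion $Q_3=a_0^3+3a_0\norm b^2+3\ip b{Cb}+\tr(C^3)$, the same Schur-complement/Neumann reduction with slope-$\ge1$ uniqueness, and the same geometric tail bound. The only variations are that you use digamma monotonicity instead of the paper's positive-coefficient log-gamma series for $r_\alpha\le r_*$, and a numerical value where the paper certifies $r_*<1$ via log-convexity ($\Gamma(1/4)\le\sqrt{12}\,\Gamma(3/4)$) together with $\pi>3$. One correction to your aside in Step 3: $b\ne0$, hence $\lambda>a_0$, is genuinely needed for $\lambda$ to be a root greater than $a_0$, and as in the paper it holds because $S_\alpha\mathbf1(0)=a_0>0$ while $S_\alpha\mathbf1(u)\to0$ as $u\to\infty$.
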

\begin{proof}
The block decomposition relative to $\mathbf1\oplus\mathbf1^\perp$
gives, first for finite compressions and then by Schatten convergence,
\[
 Q_3=a_0^3+3a_0\norm b^2+3\ip b{Cb}+\tr(C^3).
\]
All terms after $a_0^3$ are nonnegative. Consequently
$\norm C^3\le Q_3-a_0^3$ and
$\mu_0/a_0^2\le r_\alpha^3/3$.
The log-gamma series gives
\[
 \log\frac{Q_3}{a_0^3}
 =\sum_{n\ge2}\frac{\zeta(n)}n
       \bigl[(3/2)^n-1-(1/2)^n\bigr]\alpha^n.
\]
It has positive coefficients and converges at $\alpha=1/2$.
This proves $r_\alpha\le r_*$. Log-convexity at
$3/4,5/4,7/4$ gives
$\Gamma(1/4)/\Gamma(3/4)\le\sqrt{12}$, and $\pi>3$
then proves $Q_3/a_0^3<2$ at that endpoint.

The constant function is not an eigenfunction: $S_\alpha\mathbf1(u)$
tends to zero as $u\to\infty$ and is positive at zero.
Hence the variational principle gives $\lambda>a_0>\norm C$.
Taking the Schur complement gives
$\lambda=a_0+\ip b{(\lambda-C)^{-1}b}$.
Its convergent Neumann expansion is \eqref{eq:scalar-root}.
The right side decreases in $\lambda$, whereas the left side increases,
which gives uniqueness. For the truncated equation the derivative
of the left side minus the right side is at least one. Its omitted
tail is bounded, for $x\ge a_0$, by
\[
 \sum_{j>N}\frac{\mu_j}{x^{j+1}}
 \le\frac{\mu_0}{a_0}\frac{r_\alpha^{N+1}}{1-r_\alpha}.
\]
The preceding bound on $\mu_0$ proves \eqref{eq:scalar-remainder}.
\end{proof}

The coefficients can be written without eigenfunctions. Set
$M_0=1$ and, for $n\ge1$,
\[
 M_n=\ip{\mathbf1}{S_\alpha^n\mathbf1}
 =\int_{(0,\infty)^{n+1}}e^{-\sum_{j=0}^n u_j}
           \prod_{j=0}^{n-1}|u_j-u_{j+1}|^{-\alpha}\dd u_0\cdots\dd u_n.
\]
The resolvent block identity gives the finite recursion
\[
 \mu_0=M_2-a_0^2,\qquad
 \mu_j=M_{j+2}-a_0M_{j+1}
              -\sum_{k=0}^{j-1}\mu_kM_{j-k}\quad(j\ge1).
\]
These are path integrals, distinct from the cycle traces.
The scalar equation is an exact convergent representation; its
coefficients beyond the explicitly evaluated cases and its root are
not claimed to have a finite closed form.

\subsection{Explicit residual bounds and finite exponential trials}
The third trace also bounds the error of an arbitrary trial function,
without requiring a certified finite-dimensional eigensolver.

\begin{proposition}[Residual enclosure]\label{prop:residual}
Let $0<\alpha\le1/2$, $\norm h_{L^2(\nu)}=1$, and
\[
 \theta=\ip h{S_\alpha h},\qquad
 \sigma^2=\norm{S_\alpha h-\theta h}_{L^2(\nu)}^2,\qquad
 \beta=(Q_3-\theta^3)^{1/3}.
\]
If $\theta>\beta$, then
\begin{equation}\label{eq:residual-bound}
 \theta\le\lambda_{\max}(S_\alpha)
            \le\theta+\frac{\sigma^2}{\theta-\beta}.
\end{equation}
In particular, the separation condition holds whenever
$\theta\ge a_0$. All quantities in this enclosure have explicit
finite formulas when $h$ is a finite linear combination of
exponentials.
\end{proposition}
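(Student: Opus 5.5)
The argument mirrors Proposition~\ref{prop:interval-residual}, with the analytic deflation bound replaced by a third-trace bound on the rest of the spectrum. Since $0<\alpha\le1/2$, the discussion before Proposition~\ref{prop:scalar-root} shows that $S_\alpha$ is positive, compact and in the Schatten class of order three with $\tr(S_\alpha^3)=Q_3$. Write its eigenvalues as $\lambda_1\ge\lambda_2\ge\cdots\ge0$. The lower bound $\theta\le\lambda_1$ is the variational principle.

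\textbf{Step 1: bound $\lambda_2$ by $\beta$.} We have $\lambda_1^3+\lambda_2^3\le Q_3$, and $\lambda_1\ge\theta$, so $\lambda_2^3\le Q_3-\theta^3$. Hence every spectral value other than $\lambda_1$ lies in $[0,\beta]$. In particular $Q_3-\theta^3\ge0$, so $\beta$ is well defined as a nonnegative real.

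\textbf{Step 2: residual estimate.} For every spectral value $t$ of $S_\alpha$,
\[
 (t-\lambda_1)(t-\beta)\ge0 .
\]
Indeed, either $t=\lambda_1$, or $t\le\beta<\theta\le\lambda_1$. Integrate this inequality against the spectral probability measure of $h$. Using $\langle h,S_\alpha h\rangle=\theta$ and $\|S_\alpha h\|^2=\sigma^2+\theta^2$ (the cross term vanishes by orthogonality), this gives
\[
 \sigma^2\ge(\lambda_1-\theta)(\theta-\beta).
\]
Dividing by $\theta-\beta>0$ yields \eqref{eq:residual-bound}.

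\textbf{Step 3: the separation claim.} Suppose $\theta\ge a_0$. We need $\theta^3>Q_3-\theta^3$, that is, $2\theta^3>Q_3$. Since $\theta^3\ge a_0^3$, it suffices to have $Q_3<2a_0^3$, i.e.\ $r_\alpha<1$. This is exactly the bound $r_\alpha\le r_*<1$ from Proposition~\ref{prop:scalar-root}, valid for $0<\alpha\le1/2$.

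\textbf{Step 4: explicit formulas.} Take $h=\sum_k c_ke^{-\gamma_ku}$.
\begin{itemize}
\item The Gram entries are $\int e^{-(1+\gamma_k+\gamma_l)u}\dd u=1/(1+\gamma_k+\gamma_l)$.
\item The quadratic form uses the two-exponential Laplace identity from the proof of Proposition~\ref{prop:Laguerre}, with $a=1+\gamma_k$, $b=\gamma_l$ and $\ell(c)=\Gamma(1-\alpha)c^{\alpha-1}$.
\item The remaining quantity is $\|S_\alpha h\|^2$. Here one computes $S_\alpha e^{-\gamma v}(u)$ by splitting at $v=u$. This gives $e^{-\gamma u}$ times incomplete-gamma expressions; on $v>u$ the factor $\int_0^\infty e^{-\gamma s}s^{-\alpha}\dd s$ is complete, while on $v<u$ the integral is a lower incomplete gamma. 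The weighted $L^2(\nu)$ integrals of products of these are then explicit hypergeometric/beta-type formulas.
\end{itemize}
Rather than evaluate them in full, it is enough to note that $\|S_\alpha h\|^2$ is a finite triple integral of exponentials against $|u-v|^{-\alpha}|u-w|^{-\alpha}$. This triple integral reduces to finitely many one-dimensional gamma/beta integrals by ordering the three variables, exactly as in the trace computations.

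I expect Step 4 to be the main obstacle, specifically producing a genuinely closed finite formula for the path integral $\|S_\alpha h\|^2$. Steps 1 to 3 are routine given the earlier results.
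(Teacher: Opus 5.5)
Your Steps 1--3 are correct and coincide with the paper's proof. The third-trace identity together with $\theta\le\lambda_1$ gives $\lambda_2\le\beta$. Integrating $(t-\lambda_1)(t-\beta)\ge0$ against the spectral measure of $h$ gives $\sigma^2\ge(\lambda_1-\theta)(\theta-\beta)$. The separation claim is exactly $Q_3<2a_0^3$ from Proposition~\ref{prop:scalar-root}.

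Step 4, however, contains an error. $S_\alpha$ acts on $L^2(\nu)$, so $(S_\alpha f)(u)=\int_0^\infty|u-v|^{-\alpha}f(v)e^{-v}\,dv$, and the integrated variable also carries the weight $e^{-v}$. For $e_i(u)=e^{-t_iu}$, the two-exponential Laplace identity must therefore be used with $a=1+t_i$ and $b=1+t_j$, not $b=t_j$. This gives $V_{ij}=\Gamma(1-\alpha)[(1+t_i)^{\alpha-1}+(1+t_j)^{\alpha-1}]/(2+t_i+t_j)$. Your version has two defects:
\begin{itemize}
\item it is not symmetric in $(i,j)$;
\item it diverges at $t_j=0$, i.e.\ for the constant function. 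That is precisely the trial with $\theta=a_0$, and it is included in the paper's rate sets.
\end{itemize}
The same slip affects your third bullet. On $v>u$ the complete factor is $\int_0^\infty e^{-(1+t_j)s}s^{-\alpha}\,ds=\Gamma(1-\alpha)(1+t_j)^{\alpha-1}$, not $\int_0^\infty e^{-t_js}s^{-\alpha}\,ds$.

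The obstacle you anticipated is resolved in the paper (in the text after the proof) as follows. One has $\|S_\alpha h\|^2=\sum_{i,j}c_ic_jW_{ij}$ with $W_{ij}=H_\alpha(1+t_i,1,1+t_j)$, where $H_\alpha(a,b,c)=\iiint e^{-au-bv-cw}|u-v|^{-\alpha}|v-w|^{-\alpha}\,du\,dv\,dw$. Splitting into the six orderings:
\begin{itemize}
\item the common minimum contributes the factor $1/(a+b+c)$;
\item when the shared variable $v$ is in the middle, the two gaps decouple into the gamma products $\Gamma(1-\alpha)^2((b+c)c)^{\alpha-1}$ and $\Gamma(1-\alpha)^2((a+b)a)^{\alpha-1}$;
\item when $v$ is extreme, the gaps are $r$ and $r+t$, giving four terms of the form $F_\alpha(b',c')=\iint e^{-b'r-c't}r^{-\alpha}(r+t)^{-\alpha}\,dr\,dt$.
\end{itemize}
The substitution $r=xq$, $t=(1-x)q$, followed by the gamma integral in $q$, turns $F_\alpha$ into $\Gamma(2-2\alpha)\int_0^1x^{-\alpha}(b'x+c'(1-x))^{2\alpha-2}\,dx$. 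This is an Euler integral for a Gauss ${}_2F_1$, not a beta value, and it becomes elementary (arctan/artanh) at $\alpha=1/2$. So the finite formula genuinely involves hypergeometric values. Your description of ``one-dimensional gamma/beta integrals'' therefore undersells what is needed, although the ordering strategy itself is the right one.
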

\begin{proof}
Write $\lambda_1\ge\lambda_2\ge\cdots\ge0$ for the spectrum.
Since $\theta\le\lambda_1$, the third trace implies
$\lambda_2\le(Q_3-\theta^3)^{1/3}=\beta$.
For each spectral value $x$, $(x-\lambda_1)(x-\beta)\ge0$.
Integration against the spectral probability measure of $h$ yields
$\sigma^2\ge(\lambda_1-\theta)(\theta-\beta)$, proving the claim.
The final separation statement follows from
$Q_3<2a_0^3$ in Proposition~\ref{prop:scalar-root}.
\end{proof}

Here are the finite formulas used to evaluate the residual. For
$b,c>0$, define
\begin{align}
 F_\alpha(b,c)
 &=\int_0^\infty\!\int_0^\infty
       e^{-br-ct}r^{-\alpha}(r+t)^{-\alpha}\dd r\dd t\notag\\
 &=\frac{\Gamma(2-2\alpha)}{1-\alpha}c^{2\alpha-2}
 {}_2F_1\left(2-2\alpha,1-\alpha;2-\alpha;1-\frac bc\right).
 \label{eq:gap-hypergeometric}
\end{align}
This follows by $r=xq$, $t=(1-x)q$, followed by the gamma integral.
At $\alpha=1/2$ the formula becomes elementary:
\begin{equation}\label{eq:gap-critical}
 F_{1/2}(b,c)=
 \begin{cases}
 \displaystyle\frac{2\arctan\sqrt{b/c-1}}{\sqrt{c(b-c)}},&b>c,\\[4pt]
 2/c,&b=c,\\[4pt]
 \displaystyle\frac{2\operatorname{artanh}\sqrt{1-b/c}}{\sqrt{c(c-b)}},&b<c.
 \end{cases}
\end{equation}
For $a,b,c>0$ put
\begin{align}
 H_\alpha(a,b,c)=\frac1{a+b+c}\Bigl\{&
 \Gamma(1-\alpha)^2\bigl[((b+c)c)^{\alpha-1}
                           +((a+b)a)^{\alpha-1}\bigr]\notag\\
 &+F_\alpha(a+c,c)+F_\alpha(a+c,a)
   +F_\alpha(b,b+c)+F_\alpha(b,a+b)\Bigr\}.
 \label{eq:triple-Laplace}
\end{align}
Splitting the three-coordinate integral into its six orderings gives
\[
 H_\alpha(a,b,c)=\iiint e^{-au-bv-cw}
                 |u-v|^{-\alpha}|v-w|^{-\alpha}\dd u\dd v\dd w.
\]
The common minimum integrates to $1/(a+b+c)$. When $v$ is the
middle coordinate the two gaps separate and give the two gamma
products; when it is an extreme coordinate they give the four
$F_\alpha$ terms. This also supplies an independent direct proof
of \eqref{eq:triple-Laplace}.

Choose distinct nonnegative rates $t_i$ and put $e_i(u)=e^{-t_i u}$.
Their Gram, first-operator and squared-operator matrices are
\begin{align}
 G_{ij}&=\frac1{1+t_i+t_j},\notag\\
 V_{ij}&=\frac{\Gamma(1-\alpha)
        [(1+t_i)^{\alpha-1}+(1+t_j)^{\alpha-1}]}{2+t_i+t_j},
       \label{eq:exponential-trial-matrices}\\
 W_{ij}&=H_\alpha(1+t_i,1,1+t_j).\notag
\end{align}
For any nonzero real coefficient vector $v$,
\[
 \theta=\frac{v^TVv}{v^TGv},\qquad
 \sigma^2=\frac{v^TWv}{v^TGv}-\theta^2.
\]
There is no omitted-basis contribution in this residual: $W$
represents $S_\alpha^2$, not the square of its finite compression.
A numerical eigensolver may select $v$; the final bound only uses
the chosen coefficients and these explicit quadratic forms.

\subsection{The common endpoint coefficient of the two parameter limits}
The same spectral problem controls the interaction of the weight
and memory boundaries. For fixed $d\le0$, let $C_{2,d}$ denote the limit of
$\Lambda_2(\alpha,d)/\sqrt{1-2\alpha}$ as $\alpha\uparrow1/2$;
its existence is included in the proof below.

\begin{theorem}[A common endpoint coefficient]\label{thm:joint-endpoint}
Define
\[
 C_{\exp}=\frac{\lambda_{\max}(S_{1/2})}{\sqrt2}.
\]
As $\alpha\uparrow1/2$ and $d\to-\infty$, jointly,
\begin{equation}\label{eq:joint-endpoint}
 \frac{\Lambda_2(\alpha,d)}{\sqrt{1-2\alpha}}
 =C_{\exp}+O\bigl(1-2\alpha+(1-d)^{-2}\bigr).
\end{equation}
The implied constant is uniform for $\alpha$ sufficiently close
to $1/2$ and $d\le0$. Moreover, putting $\kappa_0=1/2-d$,
\begin{equation}\label{eq:boundary-weight-prefactor}
 C_{2,d}=\frac{\lambda_{\max}(S_{1/2,\kappa_0})}{\sqrt2},\qquad
 0<C_{\exp}-C_{2,d}\le\frac{\sqrt3}{48\kappa_0^2}.
\end{equation}
The coefficient $C_{2,d}$ strictly increases as $d$ decreases.
In particular, both iterated limits of the left side of
\eqref{eq:joint-endpoint} equal $C_{\exp}$.
\end{theorem}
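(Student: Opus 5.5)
Start from the fixed-space representation \eqref{eq:concentration-exact}, $\Lambda_2(\alpha,d)=\mathcal D_{2,\kappa}/\sqrt{2\mathcal Z_{2\alpha,\kappa}}$ with $\kappa=1-d-\alpha$. With $\varepsilon=1-2\alpha$, $s=2\alpha=1-\varepsilon$, the normalization is
\[
 \mathcal Z_{s,\kappa}=\kappa^{\varepsilon}B(\kappa+s/2,\varepsilon),
\]
and $B(x,\varepsilon)=\Gamma(\varepsilon)\Gamma(x)/\Gamma(x+\varepsilon)=\varepsilon^{-1}[1+O(\varepsilon)]$. The remainder is uniform in $x=\kappa+s/2\ge1/2$, since $\Gamma(x)/\Gamma(x+\varepsilon)=1+O(\varepsilon(1+\log x))$. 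The factor $\kappa^\varepsilon$ is also $1+O(\varepsilon\log\kappa)$.

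This logarithm is the first uniformity issue. $\kappa^{\varepsilon}\Gamma(x)/\Gamma(x+\varepsilon)=\exp(\varepsilon[\log\kappa-\psi(\xi)])$ with $\xi\in(x,x+\varepsilon)$. Since $\psi(\xi)=\log\xi+O(1/\xi)$ and $x=\kappa+\alpha$, the logarithms cancel. Hence
\[
 \varepsilon\mathcal Z_{s,\kappa}=1+O(\varepsilon)
\]
uniformly in $\kappa\ge1/2$, including $\kappa=\infty$, where $\varepsilon\Gamma(\varepsilon)=1+O(\varepsilon)$. Therefore $\Lambda_2/\sqrt\varepsilon=\mathcal D_{2,\kappa}(\alpha)/\sqrt2\cdot[1+O(\varepsilon)]$, and it remains to show
\[
 \lambda_{\max}(S_{\alpha,\kappa})=\lambda_{\max}(S_{1/2})+O\bigl(\varepsilon+\kappa^{-2}\bigr)
\]
uniformly.

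\textbf{Operator estimate.} Split
\[
 S_{\alpha,\kappa}-S_{1/2}=(S_{\alpha,\kappa}-S_{\alpha,\infty})+(S_\alpha-S_{1/2}).
\]
For the first term, the pointwise bound \eqref{eq:hyperbolic-difference} gives a Hilbert--Schmidt bound \eqref{eq:kernel-rate} of order $\kappa^{-2}$, uniformly for $\alpha\le1/2$. At $\alpha=1/2$ this reads $\tfrac12\sqrt{\Gamma(4)}/(24\kappa^2)=\sqrt6/(48\kappa^2)$. A Schur-test version, $\sup_u\int t^{2-\alpha}e^{-v}\dd v$, is not bounded in $u$, so I would keep the $L^2(\nu\otimes\nu)$ bound; only the operator norm is needed.

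For the second term, use the local operator-norm analyticity of $\alpha\mapsto S_\alpha$ on $(0,1)$ established before Proposition~\ref{prop:scalar-root}. This gives $\|S_\alpha-S_{1/2}\|=O(\varepsilon)$ near $1/2$. Weyl/min--max turns both norm bounds into eigenvalue bounds, proving \eqref{eq:joint-endpoint}. Letting $d\to-\infty$ or $\alpha\uparrow1/2$ first yields both iterated limits equal to $C_{\exp}$.

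\textbf{Fixed $d$ and the constant $\sqrt3/48$.} For fixed $d$, the same argument at finite $\kappa$ gives $C_{2,d}=\lambda_{\max}(S_{1/2,\kappa_0})/\sqrt2$ with $\kappa_0=1/2-d$. It requires $\|S_{\alpha,\kappa}-S_{1/2,\kappa_0}\|\to0$, which follows from analyticity of the hyperbolic kernel in $(\alpha,\kappa)$ via the same Schur-type logarithmic estimates. Positivity of $C_{\exp}-C_{2,d}$ comes from the kernel comparison $k_{1/2,\kappa_0}\le k_{1/2,\infty}$ pointwise, with strict inequality off the diagonal. Take the positive leading eigenfunction $h$ of $S_{1/2,\kappa_0}$ (Perron--Frobenius for the strictly positive kernel). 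Then
\[
 \lambda_{\max}(S_{1/2})\ge\langle h,S_{1/2}h\rangle>\langle h,S_{1/2,\kappa_0}h\rangle.
\]
The upper bound $\sqrt3/48\cdot\kappa_0^{-2}$, after dividing by $\sqrt2$, requires $\lambda_{\max}(S_{1/2})-\lambda_{\max}(S_{1/2,\kappa_0})\le\sqrt6/(48\kappa_0^2)$. That is exactly the Hilbert--Schmidt bound \eqref{eq:kernel-rate} at $\alpha=1/2$, with $\Gamma(4)=6$.

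\textbf{Monotonicity.} Strict increase of $C_{2,d}$ as $d$ decreases uses that $z\mapsto\sinh z/z$ is increasing. Hence $[2\kappa\sinh(t/2\kappa)]^{-1/2}$ is strictly increasing in $\kappa$ for $t>0$, and the same positive-eigenfunction comparison gives strict monotonicity.

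\textbf{Main obstacle.} I expect the hardest step to be the uniformity of the $O(\varepsilon)$ terms across all $\kappa$. This concerns both the normalization, where the $\kappa^\varepsilon$ versus digamma cancellation is needed, and $\|S_{\alpha,\kappa}-S_{1/2,\kappa}\|=O(\varepsilon)$ uniformly in $\kappa$. For the latter, I would bound the $\alpha$-derivative of the hyperbolic kernel, $|\log(2\kappa\sinh(t/2\kappa))|\,k_{\alpha,\kappa}$, by a $\kappa$-free Schur majorant. For $t\le1$, use $(1+|\log t|)t^{-\alpha'}$ with $\alpha'<1$, since $2\kappa\sinh(t/2\kappa)\ge t$. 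For $t\ge1$, use $C(1+t)$ times $e^{-\cdot}$ against $\nu$, which is bounded in Hilbert--Schmidt norm.
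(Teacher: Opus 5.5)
Your proposal is correct and follows the paper's proof essentially step by step. The shared ingredients are the fixed-space representation \eqref{eq:concentration-exact}; the split of $S_{\alpha,\kappa}-S_{1/2}$ through $S_{\alpha,\infty}$, using operator-norm analyticity in $\alpha$ together with the Hilbert--Schmidt bound \eqref{eq:kernel-rate}, which is still valid at $\alpha=1/2$ and gives $\sqrt6/(48\kappa^2)$, hence $\sqrt3/(48\kappa_0^2)$; and testing on positive principal eigenfunctions for the strict inequality and the monotonicity in $d$. The differences are only cosmetic: you get $\varepsilon\mathcal Z_{2\alpha,\kappa}=1+O(\varepsilon)$ uniformly through a digamma cancellation, whereas the paper reads it off \eqref{eq:normalization-rate} as $\varepsilon\Gamma(\varepsilon)+O(\varepsilon\kappa^{-2})$, and with your own split the uniform-in-$\kappa$ bound on $\|S_{\alpha,\kappa}-S_{1/2,\kappa}\|$ that you flag as the main obstacle is never needed.
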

\begin{proof}
The strict positivity of the kernel makes the leading eigenvalue
simple. The operator-norm analyticity above therefore implies
\[
 \lambda_{\max}(S_\alpha)
       =\lambda_{\max}(S_{1/2})+O(1-2\alpha).
\]
The kernel-difference proof of \eqref{eq:kernel-rate} still applies
at $\alpha=1/2$, even though the separate kernels are not in
$L^2(\nu\otimes\nu)$ there. It gives, uniformly near that endpoint,
\[
 0\le\lambda_{\max}(S_\alpha)
       -\lambda_{\max}(S_{\alpha,\kappa})
 \le\frac{\alpha\sqrt{\Gamma(5-2\alpha)}}{24\kappa^2}.
\]
Here nonnegativity follows by testing on the positive principal
eigenfunction of the smaller kernel; the upper estimate follows
from the norm of the difference. Set $\varepsilon=1-2\alpha$.
The normalization estimate in the concentration proof gives
\[
 \varepsilon\mathcal Z_{2\alpha,\kappa}
 =\varepsilon\Gamma(\varepsilon)+O(\varepsilon\kappa^{-2})
 =1+O(\varepsilon),\qquad \kappa\ge1/2.
\]
Use \eqref{eq:concentration-exact}, with
$\kappa=1-d-\alpha\asymp1-d$, to prove
\eqref{eq:joint-endpoint}.

For fixed $d\le0$, $\kappa\to\kappa_0$. Continuity of the
hyperbolic-kernel operator, proved by the same local Schur bounds,
and the residue
$\varepsilon\mathcal Z_{1-\varepsilon,\kappa}\to1$
give the identity in \eqref{eq:boundary-weight-prefactor}.
At $\alpha=1/2$ the preceding difference bound, divided by
$\sqrt2$, is exactly $\sqrt3/(48\kappa_0^2)$.
Finally $[2\kappa\sinh(t/(2\kappa))]^{-1/2}$ strictly increases
with $\kappa$ for every $t>0$ and is strictly below $t^{-1/2}$.
Testing on positive principal eigenfunctions proves both strict
claims. This monotonicity concerns the boundary coefficients;
it does not assert monotonicity of the normalized constant at
a fixed interior memory parameter.
\end{proof}

The finite exponential trials give a reproducible interval evaluation
of this previously variational endpoint coefficient:
\begin{equation}\label{eq:critical-numerical-interval}
 \boxed{1.370323114331<C_{\exp}<1.370323114332.}
\end{equation}
The outward-rounded interval follows from
Proposition~\ref{prop:residual}, using 41 exponential functions,
the elementary formula \eqref{eq:gap-critical}, and interval
arithmetic as detailed in Appendix~\ref{app:residual-numerics}.
It is a numerical enclosure of an exact coefficient, not a claim
that this decimal interval is a closed-form evaluation.

\section{Scope and further questions}
The results concern Hermite rank two. The finite-weight spectral family,
its exponential-measure limit and the unweighted interval boundary are
different operators. In particular $C_{2,0}$ and $C_{\exp}$ are distinct
coefficients and their certificates must not be interchanged.
The auxiliary boundary operators do not assert the original stochastic
LIL at $\alpha=1/2$.

The spectral residual inequalities certify a selected trial once its full
operator residual is bounded. They do not require an exact eigenfunction
or exponential convergence of a chosen basis. The 25-decimal interval is
a rigorous numerical evaluation, not a finite closed form or proof of
transcendence. Neither unsuccessful symbolic recognition nor the lack of
a formula in previous numerical work proves nonexistence of a closed form.

An explicit remainder for the unweighted boundary asymptotic would quantify
its accuracy at a specified interior memory parameter. Further useful
questions concern improved finite-weight spectral bounds and evaluation
of the scalar path-moment equation. The proposed location of a maximum
in a Schur quotient must be proved before a grid observation is used as
a uniform inequality. The general-rank global bounds, local expansions
and concentration theorem are developed in \cite{MoldavskayaGeneral}.

\appendix
\section{A reproducible full-operator interval certificate}
\label{app:interval-certificate}
The model in this appendix is the single auxiliary operator
$K_*=K_{1/2}$ on $L^2([0,1],\dd x)$, corresponding to $m=2,d=0$.
All computations are deterministic.

\subsection{Exact trial functions and matrix elements}
Use the reflection-symmetric functions
\[
 \varphi_a(x)=x^a+(1-x)^a,\qquad
 a\in\{0,1/2,2,3/2,4,5/2,\ldots\}.
\]
Odd integer powers are redundant in this symmetric family.
For a trial $g=\sum_i c_i\varphi_{a_i}$, the coefficients $c_i$ are
fixed exact rationals with denominator $10^{100}$. The shift $\tau$
is an exact rational with denominator $10^{60}$ and $0<\tau<3$.
The numerical eigensolver selects the trial only; it is not used to
certify its approximation error.

The Gram and first-operator matrix elements are
\begin{align*}
 G_{ab}&=\frac2{a+b+1}+2B(a+1,b+1),\\
 V_{ab}&=2\frac{B(a+1,1/2)+B(b+1,1/2)}{a+b+3/2}+2J(a,b),\\
 J(a,b)&=\iint x^az^b|1-x-z|^{-1/2}\dd x\dd z.
\end{align*}
The direct term in $V$ follows by ordering the two variables; the
reflected term follows by $z=1-y$. The remaining moments have the finite
recursion, for $a>0$,
\[
 J(a,b)=\frac{aJ(a-1,b)+(a+1/2)^{-1}}{a+b+3/2},
 \qquad J(a,b)=J(b,a),
\]
with starting values
\[
 J(0,b)=2\left[B(b+1,3/2)+\frac1{b+3/2}\right],\qquad
 J(-1/2,-1/2)=2\pi+4\log2.
\]
For the recursion, integrate the divergence of
$x^az^b|1-x-z|^{-1/2}(x-1,z)$ on the two triangles separated by
$x+z=1$. Its divergence is
$[(a+b+3/2)x^az^b-ax^{a-1}z^b]|1-x-z|^{-1/2}$.
The only nonzero outer flux is at $z=1$, with integral
$(a+1/2)^{-1}$; the flux across a deleted strip about the singular line
tends to zero. The starting values follow by direct one-dimensional
integration. Half-integer beta values reduce by the gamma recurrence
to rational numbers and $\pi$. Thus no interval gamma evaluation is
required for these matrices.

\subsection{The exact residual and its endpoint logarithm}
Write $F_a(x)=K_*(y\mapsto y^a)(x)$. Direct integration and integration
by parts give
\[
 F_0(x)=2(\sqrt x+\sqrt{1-x}),\qquad
 F_{-1/2}(x)=\pi+2\log(1+\sqrt{1-x})-\log x,
\]
\[
 F_a(x)=\frac{2axF_{a-1}(x)+2\sqrt{1-x}}{2a+1},\qquad a>0.
\]
The integer powers start from $F_0$ and the positive half-integer
powers from $F_{-1/2}$. Reflection supplies $K_*\varphi_a$.
For example,
\[
 F_{1/2}(x)=\sqrt{1-x}
 +x\left[\frac\pi2+\log(1+\sqrt{1-x})-\frac12\log x\right].
\]
This identity shows why a power-only endpoint argument is insufficient
to establish exponential convergence of a trial basis. No full
eigenfunction expansion is assumed here.

On $0<x<1/2$ the exact shifted residual has the form
\[
 r(x):=K_*g(x)-\tau g(x)=E(x)+\sqrt x\,O(x)+B(x)\log x,
\]
where $E,O$ are analytic in $|z|<1$ and $B$ is a finite polynomial.
The Taylor coefficients follow from the preceding recursions and
\begin{align*}
 \sqrt{1-z}&=1-\sum_{k\ge1}
       \frac{\binom{2k}{k}}{4^k(2k-1)}z^k,\\
 \pi+2\log(1+\sqrt{1-z})
 &=\pi+2\log2-\sum_{k\ge1}\frac{\binom{2k}{k}}{k4^k}z^k,\\
 F_{-1/2}(1-x)&=\pi+2\sqrt x\sum_{k\ge0}\frac{x^k}{2k+1}.
\end{align*}
Reflection extends $r$ from the half interval to the full interval.

\subsection{An analytic bound for the omitted series}
Choose an exactly checked integer $H\ge\sum_i|c_i|$, put
$k_0=\max_i\lfloor a_i\rfloor$, and set
\[
 M_0=32H\,2^{k_0+1}.
\]
Both analytic parts $E,O$ have modulus at most $M_0$ on $|z|=3/4$.
Here are explicit bounds that verify the constant. The initial square-root
series has modulus at most $\sqrt{7/4}<3/2$; the direct logarithmic
analytic part is bounded by $\pi+2\log2+\log4<6$; and the reflected
odd series is bounded by $2/(1-3/4)=8$. In the direct recurrence the
multiplier has modulus at most $3/4$ and the additive term at most 3,
giving bounds 12 and 2 for its even and odd parts. In the reflected
recurrence the multiplier is at most $7/4$; induction bounds the
integer even/odd parts by $3(7/4)^{k_0}$ and $5(7/4)^{k_0}$ and the
half-integer parts by $4(7/4)^{k_0+1}$ and $11(7/4)^{k_0+1}$.
The terms $\tau g$, with $|\tau|<3$, are covered by the displayed
factor 32 and $(7/4)^{k_0+1}<2^{k_0+1}$.

Let $E_L,O_L$ be the degree-$L$ Taylor polynomials and
$r_L=E_L+\sqrt x\,O_L+B\log x$, extended by reflection.
Cauchy's coefficient estimate gives a pointwise tail at most
$6M_0(4x/3)^{L+1}$ on $[0,1/2]$. Integrating its square on the two
half intervals proves
\begin{equation}\label{eq:series-tail}
 \|r-r_L\|_2\le T_L:=
 \frac{6M_0}{\sqrt{2L+3}}\left(\frac23\right)^{L+1}.
\end{equation}
All terms of $\|r_L\|_2^2$ are finite combinations of
\begin{align*}
 \int_0^{1/2}x^s\dd x&=\frac{2^{-s-1}}{s+1},\\
 \int_0^{1/2}x^s\log x\dd x
 &=\frac{2^{-s-1}}{s+1}\left(-\log2-\frac1{s+1}\right),\\
 \int_0^{1/2}x^s\log^2x\dd x
 &=\frac{2^{-s-1}}{s+1}
 \left[(\log2)^2+\frac{2\log2}{s+1}+\frac2{(s+1)^2}\right],
 \qquad s>-1.
\end{align*}
If directed interval arithmetic bounds this finite norm squared from
above by $A_L$, the full residual obeys
\[
 \sigma^2\le\frac{\|r\|_2^2}{\|g\|_2^2}
 \le\frac{(\sqrt{A_L}+T_L)^2}{\|g\|_2^2}.
\]
This controls $K_*g$ itself, including its component outside the trial
space. It is not the residual of a finite matrix compression.

\subsection{Certificate data}
The exact rational trial has dimension 128. With $k_0=126$,
$H=10^{239}$ and $L=1825$, the preceding finite-moment calculation
and analytic remainder bound give
\[
 T_L<1.550\cdot10^{-44},\qquad \sigma^2<1.166\cdot10^{-26}.
\]
All finite calculations are enclosed by outward interval arithmetic.
The exact rational trial and certificate data are included
in the source file of this article. Together with the proof above,
they permit independent verification of Theorem~\ref{thm:interval-value}.

\section{Additional deterministic computations}\label{app:numerics}
An independent check evaluates the gamma product by ordering $0<x<y<z<1$ and setting
$z=r$, $y=rv$, $x=rvu$. After exact radial integration, product
Gauss--Jacobi quadrature is applied to
\[
 \frac6{3-3d-3\alpha}\int_0^1\int_0^1
 u^{-d}(1-u)^{-\alpha}v^{1-2d-\alpha}(1-v)^{-\alpha}
 (1-uv)^{-\alpha}\dd u\dd v.
\]
There are 12 deterministic evaluations with $(\alpha,d)$ equal to
$(0.1,0)$, $(0.2,0.1)$, or $(0.3,-0.25)$ and $N=64,128,256,512$
nonuniform nodes in each variable. No single grid spacing applies.
There are zero Monte Carlo runs and no seed. Relative differences at
$N=512$ are approximately $4.11\cdot10^{-11}$, $2.73\cdot10^{-9}$ and
$2.82\cdot10^{-7}$, respectively. They decrease under refinement and are
corroboration of the exact identity, not rigorous numerical error bounds.

\subsection{Laguerre matrix computations}
The spectral calculation uses
\[
 \alpha\in\{0.1,0.2,0.4\},\qquad
 \kappa\in\{1,4,16,\infty\},\qquad N\in\{16,64,128\}.
\]
At infinity the matrix recurrence uses only the generalized binomial
coefficients of \eqref{eq:Laguerre-limit}. At finite $\kappa$ the
coefficient correction is evaluated as
\[
 q_n^{(\kappa)}-q_n^{(\infty)}
 =\int_0^\infty e^{-t}t^{-\alpha}
 \left[\left(\frac{\sinh(t/(2\kappa))}{t/(2\kappa)}\right)^{-\alpha}-1\right]
 [\mathcal L_n(t)-\mathcal L_{n-1}(t)]\dd t,
\]
with $\mathcal L_{-1}=0$ and quadrature orders 128 and 256.
The constant coefficient is then replaced by its exact beta formula.
This gives 54 finite-weight eigensolves and nine limiting eigensolves.
The largest change in either spectral endpoint when doubling the
quadrature order is $4.441\cdot10^{-16}$. The first coefficient also
agrees with the independent beta derivative within
$1.477\cdot10^{-14}$. These small differences do not constitute
interval-arithmetic certification.

\begin{table}[htbp]
\centering
\caption{Floating-point evaluations of the spectral endpoints in
\eqref{eq:spectral-enclosure}, divided by $\sqrt{2Q_{2,\kappa}}$,
at $N=128$. The exact endpoint formulas are rigorous bounds;
the displayed decimals have not been certified against rounding error.}
\label{tab:Laguerre-enclosures}
\begin{tabular}{rrrr}
\toprule
$\alpha$&$\kappa$&Lower endpoint&Upper endpoint\\
\midrule
0.1&1&0.702051001&0.702053930\\
0.1&$\infty$&0.702322594&0.702325473\\
0.2&1&0.681784548&0.681844929\\
0.2&4&0.682693750&0.682752778\\
0.2&16&0.682754183&0.682813118\\
0.2&$\infty$&0.682758232&0.682817160\\
0.4&1&0.516090801&0.518823588\\
0.4&$\infty$&0.518518663&0.521197758\\
\bottomrule
\end{tabular}
\end{table}

An independent exact rational calculation at $\alpha=1/5$ verifies
the leading $4\times4$ limiting matrix after removing its common
$\Gamma(1-\alpha)$ factor. It expands the Laguerre polynomials and
integrates monomials over the two ordered triangles; it does not use
the generating recurrence. Four independent third-trace checks at
$\alpha=0.1,0.2,0.4,0.49$ use the integral
\[
 2\Gamma(2-3\alpha)\int_0^1
 \frac{t^{-\alpha}(1-t)^{-\alpha}}{(1+t)^{2-3\alpha}}\dd t.
\]
Adaptive algebraically weighted quadrature with absolute and relative
tolerances $2\cdot10^{-12}$ and subdivision limit 250 agrees with
$Q_{3,\infty}$ within $1.111\cdot10^{-15}$ in relative terms.

All computations are deterministic: Monte Carlo runs $=0$, with no
random seed. No numerical assertion is used as
a substitute for a proof of the variational, asymptotic or algebraic
identities in the main text.

\subsection{Interval evaluation of exponential-limit constants}
\label{app:residual-numerics}
The exponential-trial calculation implements
Proposition~\ref{prop:residual} independently of the Laguerre
compression calculation. They concern the deterministic rank-two
exponential-measure limit; the row $\alpha=1/2$ evaluates
$C_{\exp}$, not a stochastic LIL constant at an inadmissible
critical parameter. There are no simulated paths, Monte Carlo
replications, or random seeds.

For each $\alpha\in\{1/10,1/5,2/5,1/2\}$, use
\[
 \{t_i\}=\{0\}\cup\{2^j:-4\le j\le J\},\qquad J\in\{4,8,12\}.
\]
These give 12 calculations with dimensions $10,14,18$.
A further critical calculation uses the 41 distinct rates
\[
 \{0\}\cup\{2^j,(3/2)2^j:-3\le j\le16\}.
\]
These are nonuniform rates of exponential functions, not a spatial
discretization grid. The Gram matrix is treated explicitly. A
Cholesky reduction and an ordinary eigensolver select a trial vector;
each coefficient is then rounded to an exact rational with
denominator $10^{50}$. The certification uses that rational vector,
not the eigensolver's claimed eigenvalue or residual.

All subsequent quantities use 80-decimal-digit outward interval
arithmetic for elementary operations. Gamma values are enclosed
by shifting their argument by 128 and retaining 40 terms of the
log-gamma Stirling expansion. For positive real arguments the
remainder lies between zero and the first omitted term, with its
sign \cite[\S5.11(ii)]{DLMF}. Negative arguments in $(-1,0)$ are
first shifted by the gamma recurrence. This implementation does
not call an interval gamma or interval hypergeometric function.

Away from the critical row, the standard hypergeometric
transformations \cite[Eqs.~15.8.1 and 15.8.4]{DLMF} reduce each
series argument to $z\in[0,1/2]$, with positive parameters $a,b,c$.
For the positive terms $T_n=(a)_n(b)_nz^n/((c)_nn!)$,
the tail after $T_n$ is bounded by $T_{n+1}/(1-\rho_n)$ whenever
\[
 \rho_n=z\left(1+\frac{(a-1)_+}{n+2}\right)
             \left(1+\frac{(b-c)_+}{n+1+c}\right)<1.
\]
Indeed $\rho_n$ bounds every ratio after $T_{n+1}$.
The stopping threshold for this explicit absolute tail bound is
$10^{-70}$. Formula \eqref{eq:gap-critical} replaces these series
by elementary functions at $\alpha=1/2$.

\begin{table}[htbp]
\centering\small
\begin{tabular}{ccccc}
\toprule
$\alpha$ & quantity & dimension & lower endpoint & upper endpoint\\
\midrule
$1/10$ & $\Lambda_2^{\exp}(1/10)$ &18&0.702322596879&0.702322596885\\
$1/5$ & $\Lambda_2^{\exp}(1/5)$ &18&0.682758258367&0.682758258398\\
$2/5$ & $\Lambda_2^{\exp}(2/5)$ &18&0.518519027222&0.518519027647\\
$1/2$ & $C_{\exp}$ &18&1.370323112119&1.370323117614\\
$1/2$ & $C_{\exp}$ &41&1.370323114331&1.370323114332\\
\bottomrule
\end{tabular}
\caption{Outward-rounded residual enclosures.
The final row determines the joint-limit coefficient.}
\label{tab:residual-certified}
\end{table}

The unrounded width in the final row is below $2.951\cdot10^{-13}$;
its squared-residual upper bound is below $2.815\cdot10^{-13}$.
The exact rational coefficients are included in the source
file of this article.
The historical floating-point intervals in
Table~\ref{tab:Laguerre-enclosures} keep their original,
uncertified-decimal status.


\begin{thebibliography}{99}
\bibitem{LP25} N. N. Leonenko and A. Pepelyshev (2025).
Numerical computation of the Rosenblatt distribution and applications.
\href{https://arxiv.org/abs/2506.23337}{arXiv:2506.23337}.
\bibitem{MoldavskayaGeneral}
E. Moldavskaya (2026a).
Global bounds and parameter expansions for weighted LIL constants.
Unpublished manuscript.

\bibitem{MoldavskayaLIL}
E. Moldavskaya (2026b).
Law of the Iterated Logarithm for Weighted Sums of Functionals
of Long-Memory Gaussian Sequences.
Preprint.
\href{https://arxiv.org/abs/2606.21006}{arXiv:2606.21006}.

\bibitem{MoldavskayaML}
E. Moldavskaya (2026c).
Weighted Empirical Risk Minimization for Machine Learning under
Long-Range Dependence: Exact Pathwise Rates and Learning-Error Geometry.
Preprint.
\href{https://arxiv.org/abs/2609.10767}{arXiv:2609.10767}.\bibitem{MO86} T. Mori and H. Oodaira (1986).
The law of the iterated logarithm for self-similar processes represented
by multiple Wiener integrals.
\emph{Probability Theory and Related Fields} \textbf{71}, 367--391.
\href{https://doi.org/10.1007/BF01000212}{doi:10.1007/BF01000212}.
\bibitem{MO87} T. Mori and H. Oodaira (1987).
The functional iterated logarithm law for stochastic processes represented
by multiple Wiener integrals.
\emph{Probability Theory and Related Fields} \textbf{76}, 299--310.
\href{https://doi.org/10.1007/BF01297487}{doi:10.1007/BF01297487}.
\bibitem{DLMF} NIST Digital Library of Mathematical Functions,
Sections 5.6, 5.7, 5.11, 5.12, 5.14, 5.15 and 18.12; in particular
Gautschi's inequality, Eq.~5.6.4, the Selberg integrals, Eqs.~5.14.4--5,
and the Laguerre generating function, Eq.~18.12.13.
\url{https://dlmf.nist.gov/}.
\bibitem{VT13} M. S. Veillette and M. S. Taqqu (2013).
Properties and numerical evaluation of the Rosenblatt distribution.
\emph{Bernoulli} \textbf{19}(3), 982--1005.
\href{https://arxiv.org/abs/1307.5990}{arXiv:1307.5990}.

\end{thebibliography}
\end{document}